\documentclass{article}
\usepackage{graphicx} 
\usepackage[utf8]{inputenc}
\usepackage[T1]{fontenc}
\usepackage[english]{babel}
\usepackage{geometry}

\usepackage{authblk}

\usepackage{amsfonts}
\usepackage{amsmath}
\usepackage{amssymb}
\usepackage{amstext}
\usepackage{amsthm}

\usepackage[shortlabels]{enumitem}
\usepackage{float}
\usepackage{graphicx}
\usepackage{hyperref}
\usepackage{listings}
\usepackage{mathrsfs}
\usepackage{mathtools}
\usepackage{multicol}
\usepackage{wasysym}
\usepackage{subcaption}
\usepackage[capitalize]{cleveref}

\usepackage{stmaryrd}

\usepackage{thm-restate}

\usepackage{silence}
\WarningFilter{latexfont}{Font shape}
\usepackage{tikz}
\usepackage{tikz-cd}
\usetikzlibrary{arrows, backgrounds, calc, chains, decorations, patterns, positioning, shapes}

\usetikzlibrary{snakes}

\usepackage{comment}
\specialcomment{idea}{\begingroup\color{blue}\itshape}{\endgroup}

\newtheorem{thm}{Theorem}

\newtheorem{cor}[thm]{Corollary}

\newtheorem{lemma}[thm]{Lemma}
\newtheorem{prop}[thm]{Proposition}

\newtheorem{claim}{Claim}[thm]
\newenvironment{proofclaim}[1][{\it Proof of claim. \hspace{0.066cm}}]%
	{\noindent {}{#1}{}}{ \strut\hfill $\lozenge$\vspace{2ex}}
\theoremstyle{definition}
\newtheorem*{defi*}{Definition}
\newtheorem*{thm*}{Theorem}
\theoremstyle{remark}

\newcommand{\CH}[1]{{\color{blue}{\bf CH:} #1}}
\newcommand{\ER}[1]{{\color{purple}{\bf ER:} #1}}
\newcommand{\PB}[1]{{\color{orange}{\bf PB:} #1}}

\newcommand{\NN}{\mathbb{N}}
\newcommand{\ZZ}{\mathbb{Z}}

\newcommand{\cleq}{\preccurlyeq}

\usepackage{babelbib}

\usepackage[capitalize]{cleveref}

\title{Path eccentricity of $k$-AT-free graphs and application on graphs with the consecutive ones property}
\date{v1 March 2024, v2 January 2025}

\author[1]{Paul Bastide}
\author[2]{Claire Hilaire\thanks{Partially supported by Slovenian Research and Innovation Agency (research project J1-4008 and N1-0370)}}
\author[3]{Eileen Robinson}

\affil[1]{LaBRI, Université de Bordeaux, France.} 
\affil[2]{Famnit, University of Primorska, Slovenia.} 
\affil[3]{Université libre de Bruxelles, Belgium.} 

\begin{document}

\maketitle

\begin{abstract}
The central path problem is a variation on the single facility location problem. The aim is to find, in a given connected graph $G$, a path $P$ minimizing its eccentricity, which is the maximal distance from $P$ to any vertex of the graph $G$. 
The {\em path eccentricity} of $G$ is the minimal eccentricity achievable over all paths in $G$.
In this article we consider the class of the $k$-AT-free graphs. They are graphs in which any set of three vertices contains a pair for which every path between them uses at least one vertex of the closed neighborhood at distance $k$ of the third. We prove that they have path eccentricity bounded by $k$.

Moreover, we contribute to answer a question of G\'omez and Guti\'errez, whether there was a relation between path eccentricity and the consecutive ones property. The latter is the property for a binary matrix to admit a permutation of the rows placing the 1's consecutively on the columns. It was already known that graphs whose adjacency matrices have the consecutive ones property have path eccentricity at most 1, and that the same remains true when the augmented adjacency matrices (with ones on the diagonal) has the consecutive ones property. We generalize these results as follow. We study graphs whose adjacency matrices can be made to satisfy the consecutive ones property after changing some values on the diagonal, and show that those graphs have path eccentricity at most 2, by showing that they are 2-AT-free.

\end{abstract}

\section{Introduction} 
The well-studied \emph{single facility location problem} consists in finding a location for a facility minimizing its distance to the communities it serves. For example, this could consists in finding the best place to build a fire station that has to reach any house very fast and therefore be as central as possible. Building a hospital, a police station and many other situations can raise similar concerns. This problems has been extensively studied and extended in many ways over the years (see \cite{SurveyOwenDaskin} for an in-depth survey) and has many practical applications in Geography, Economics and Computer Sciences, to only name a few. 

In this paper, we focus on the variation of this problem called the \emph{central path problem}, introduced independently by S.M. Hedetniemi, Cockayne and S.T. Hedetniemi \cite{linearAlgoPC_Cockayne} and by Slater \cite{LocatingCP}. Instead of a facility being a building, we consider railroads, transit routes, highways or pipelines for example, that have to be as close as possible to all the elements they have to serve.

To model this problem, one can use a graph $G$ where the set of vertices is the collection of communities, the set of edges is the set of roads connecting them, and the distance between two vertices is the smallest sequence of edges needed to go from one to the other.
The facility we want to find is modeled by a path in our graph, and we want it to be as close of the rest of the vertices of the graph as possible.

The notion capturing how far a path $P$ is to the rest of the graph is its eccentricity, which is the maximal distance between a vertex $u$ of the graph and the closest vertex $v$ of $P$.
It yields a invariant for graphs called the \emph{path eccentricity} of $G$, noted $pe(G)$, defined as the minimum of the eccentricity over all paths of $G$ (see formal definitions in Section~\ref{sec:preli}).

Both in \cite{linearAlgoPC_Cockayne} and \cite{LocatingCP}, the authors focus on finding a path with minimal eccentricity in trees. They also study algorithms computing such a path. They show that the class of trees has unbounded path eccentricity. 

Later on, Corneil,  Olariu and Stewart \cite{ATfree} considered the class of graphs with no asteroidal triples in depth and found a bound for their path eccentricity\footnote{In the literature, a path $P$ with eccentricity at most $k$ is also called a $k$-dominating path. Corneil et al.~\cite{ATfree} actually gave a stronger result as they showed that every AT-free graph has a 1-dominating path that has a minimum number of vertices over all the connected 1-dominating subgraphs.}. 
An \emph{asteroidal triple} is a set of three vertices such that each pair of vertices is joined by a path that avoids the closed neighborhood of the third.
A graph $G$ is said to be \emph{asteroidal triple-free}, noted \emph{AT-free}, if it does not contain any asteroidal triple.
\begin{thm}[\cite{ATfree}]\label{th:ATfreeecc1}
    If $G$ is an AT-free graph, then $pe(G) \leq 1$.
\end{thm}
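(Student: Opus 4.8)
The plan is to reformulate the conclusion $pe(G)\le 1$ in purely combinatorial terms. A path $P$ has eccentricity at most $1$ precisely when $P$ is a \emph{dominating path}, i.e.\ every vertex of $G$ either lies on $P$ or has a neighbor on $P$; indeed the eccentricity of $P$ is $\max_{u}\min_{v\in P}d(u,v)$, and this is at most $1$ exactly when $V(P)$ is a dominating set. Since path eccentricity is only considered for connected $G$, the whole problem reduces to producing a single dominating path in any connected AT-free graph.

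To build such a path I would pass through the notion of a \emph{dominating pair}: a pair $\{x,y\}$ of vertices such that the vertex set of \emph{every} $x$–$y$ path is a dominating set of $G$. If $\{x,y\}$ is a dominating pair, then in particular any shortest $x$–$y$ path is a dominating path, which immediately gives the desired eccentricity bound. Thus the heart of the matter is the structural statement, due to Corneil, Olariu and Stewart, that every connected AT-free graph admits a dominating pair.

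For the existence of a dominating pair I would argue via a BFS layering. Fix a vertex $x$ and let $N_0=\{x\},N_1,\dots,N_t$ be its distance layers, with $N_t$ the last nonempty one, and choose a candidate endpoint $y\in N_t$ extremally (for instance so as to maximize the portion of the graph reachable while staying in deep layers). AT-freeness enters as follows: if some vertex $z$ fails to be dominated by the chosen $x$–$y$ path $P$, then $z$ lies outside the closed neighborhood of $P$, and one attempts to exhibit three vertices among $\{x,y,z\}$ (or extremally chosen variants) such that each pair is joined by a path avoiding the closed neighborhood of the third — an asteroidal triple, contradicting the hypothesis. Connectivity within and between the BFS layers is what supplies the two "avoiding" routes, while the undominated vertex $z$ supplies the third.

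The main obstacle, and where the real content of the argument lies, is exactly this final step: converting a hypothetical undominated vertex into a genuine asteroidal triple. A single shortest $x$–$y$ path is too rigid to control directly, so I expect to need an extremal or inductive refinement — for example, removing $N[x]$ and analyzing the "dominant" connected component of the remainder, or a two-sweep BFS that selects \emph{both} endpoints extremally — to guarantee that the constructed path's closed neighborhood cannot be evaded without producing three mutually avoiding vertices. Once the dominating pair is secured, the reductions in the first two paragraphs turn it into a dominating path, and hence $pe(G)\le 1$, as claimed.
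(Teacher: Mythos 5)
Your reduction of $pe(G)\le 1$ to the existence of a dominating path is correct, and the route you sketch (dominating pairs, BFS layering, deriving an asteroidal triple from an undominated vertex) is indeed the strategy of Corneil, Olariu and Stewart, whose theorem this is. But as written the proposal has a genuine gap, and you name it yourself: the entire content of the theorem sits in the step you defer, namely showing that a connected AT-free graph has a dominating pair, or equivalently that an undominated vertex $z$ can actually be completed to an asteroidal triple. Invoking ``the structural statement, due to Corneil, Olariu and Stewart, that every connected AT-free graph admits a dominating pair'' is circular in this context --- that statement is not easier than the one to be proved, and the BFS-layering paragraph does not supply the two required $z$-avoiding paths: it is not explained why the chosen $x$--$y$ path, together with $z$, yields \emph{three} pairwise ``mutually avoiding'' vertices, nor which three vertices they are (note that $x$, $y$, $z$ themselves need not form an AT, since $z$ may well lie in $N[x]$ or $N[y]$ even when it avoids $N[P]$).

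For contrast, the present paper does not reprove the cited theorem directly but establishes the generalization to $k$-AT-free graphs (Theorem~\ref{thm:kATfree_pek}), whose $k=1$ case recovers this statement, by a self-contained extremal argument that fills exactly the gap you left open: take a path $P$ maximizing $\lvert N^k[P]\rvert$ and, among those, of minimum length; if some $w\notin N^k[P]$, the minimality of the length forces the existence of vertices $u'$, $v'$ at distance exactly $k$ from the two endpoints that are dominated only by those endpoints (Claim~\ref{cl:u'v'}), and one then verifies by explicit path surgery that $\{u',v',w\}$ is a $k$-AT, each avoidance failure producing an ``improving'' path contradicting maximality. If you want to complete your own proof you must either carry out an analogous construction of the triple or give an independent proof of the dominating-pair theorem; neither is done in the proposal.
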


More recently, G\'omez and Guti\'errez~\cite{PathEcc} studied the path eccentricity of convex and biconvex graphs. 
A bipartite graph $G=(X \cup Y, E)$ is said to be \emph{X-convex} if there is a total ordering of $X$ such that the neighborhood of any vertex $y\in Y$ is consecutive in the given ordering. This graph is said to be \emph{biconvex} if it is both $X$- and $Y$-convex.
They proved the following two upper bounds and showed that they are both tight.

\begin{thm}[\cite{PathEcc}]\label{thm:GGconvex}
    Let $G=(X\cup Y,E)$ be a bipartite graph. If $G$ is $X$-convex then $pe(G) \leq 2$. If $G$ is also $Y$-convex (i.e. biconvex), then $pe(G) \leq 1$. 
\end{thm}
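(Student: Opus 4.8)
The plan is to treat both claims with a single construction scheme, carried out at two levels of precision. Throughout I would assume $G$ is connected (path eccentricity is only defined there) and use the convexity to fix an interval representation: order $X=(x_1,\dots,x_p)$ so that every $y\in Y$ has $N(y)=\{x_{a_y},\dots,x_{b_y}\}$ for suitable $a_y\le b_y$. The common idea is to build one explicit path $P$ and bound the distance from every vertex to $P$; for the $X$-convex bound I would use a left-to-right greedy sweep, and for the biconvex bound I would refine it into a path that is monotone in both orderings.

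For the first statement I would construct $P$ as follows. Set $v_1=x_1$; given $v_i=x_{c_i}$, let $y_i\in N(v_i)$ be a neighbor maximizing the right endpoint $b_{y_i}$, and set $v_{i+1}=x_{b_{y_i}}$, stopping when the index can no longer increase. This yields an alternating path $P=v_1,y_1,v_2,y_2,\dots$ whose $X$-indices $c_1<c_2<\cdots$ strictly increase. Connectivity guarantees the sweep reaches $x_p$: if it stalled at $c_i<p$, then no $y$ would be adjacent to both $x_{c_i}$ and some $x_{>c_i}$, separating $\{x_1,\dots,x_{c_i}\}$ from the rest. The coverage analysis then splits in two. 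Every $x_j\in X$ lies in some chosen interval $[a_{y_i},b_{y_i}]$, because consecutive chosen intervals overlap ($a_{y_{i+1}}\le c_{i+1}=b_{y_i}$) and the first starts at $1$; hence $x_j$ is adjacent to the path-vertex $y_i$ and $d(x_j,P)\le 1$. For $y\in Y$, either some $c_i\in[a_y,b_y]$, giving $d(y,P)\le 1$, or $y$ is \emph{jumped}, i.e.\ $c_i<a_y\le b_y<c_{i+1}$ for some $i$; but then $a_{y_i}\le c_i<a_y$ and $b_y<b_{y_i}$ force $[a_y,b_y]\subseteq[a_{y_i},b_{y_i}]$, so $y$ and $y_i$ share a neighbor $x$ and $y-x-y_i$ witnesses $d(y,P)\le 2$. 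This proves $pe(G)\le 2$.

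For the biconvex statement the only deficit is the jumped $Y$-vertices sitting at distance $2$, and the plan is to spend the second ordering to pull them onto the path. Using $Y$-convexity I would replace a straight jump $v_i-y_i-v_{i+1}$ by a zigzag that threads the jumped pocket, producing a path that is simultaneously monotone in the $X$-order and the $Y$-order (a staircase path). The guiding example is $N(y_i)=\{x_1,\dots,x_5\}$ with jumped singletons $N(y_A)=\{x_2\}$, $N(y_B)=\{x_4\}$: the sweep path $x_1-y_i-x_5$ misses $y_A,y_B$, whereas the monotone path $y_A-x_2-y_i-x_4-y_B$ dominates everything, using $y_i$ once in the middle to cover $x_1,x_3,x_5$. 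I would build such a path by a double greedy that, starting from the least-indexed vertices, always extends to the adjacent vertex advancing both frontiers, and then argue that its $X$-vertices stab every $Y$-interval while its $Y$-vertices cover every $X$-vertex.

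The hard part will be exactly this last step: showing the weaving is always possible and that the resulting monotone path is $1$-dominating. The obstruction is that a pocket of jumped $Y$-vertices may have incomparable neighborhoods that no single inserted $X$-vertex can stab, so one must show that $Y$-convexity, combined with the $X$-interval representation, forces these pockets into a doubly-convex staircase normal form in which the connecting vertices needed to thread them necessarily exist. I expect the technical core to be a structural lemma giving this normal form for connected biconvex graphs, after which the staircase path and its domination bound follow. Note that one cannot in general shortcut this through Theorem~\ref{th:ATfreeecc1}, since convex bipartite graphs, even biconvex ones, need not be AT-free.
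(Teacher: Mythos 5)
First, a point of reference: the paper does not prove \cref{thm:GGconvex} at all --- it is imported from G\'omez and Guti\'errez \cite{PathEcc} as background --- so there is no in-paper argument to compare yours against, and I can only assess the proposal on its own terms. Your first half is a complete and correct proof that $X$-convexity gives $pe(G)\leq 2$: the greedy sweep is well defined, a stall at $c_i<p$ would by convexity disconnect $\{x_1,\dots,x_{c_i}\}$ together with every $y$ having $b_y\leq c_i$ from the rest of the graph, the chosen intervals overlap and tile $[1,p]$, and the nesting argument $N(y)\subseteq N(y_i)$ for jumped vertices is sound.

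The biconvex half, however, is not a proof: the entire difficulty of that case is concentrated in the ``weaving'' step, and you explicitly defer it to an unstated, unproved structural lemma. Concretely, a single pocket $c_i<a_y\leq b_y<c_{i+1}$ may a priori contain several jumped vertices with pairwise disjoint neighborhoods; a $1$-dominating path must then contain an $X$-vertex inside each of these neighborhoods, yet your staircase traverses the pocket only once and, even in your own example, threads at most one pocket vertex on each side of $y_i$. (If $N(y_i)=\{x_1,\dots,x_5\}$ and three jumped vertices have neighborhoods $\{x_2\}$, $\{x_3\}$, $\{x_4\}$, no $1$-dominating path exists; this configuration is excluded precisely because $Y$-convexity forbids $y_i$ from being an interior point of three distinct intervals $N(x_2),N(x_3),N(x_4)$ in the $Y$-order. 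That is exactly the kind of argument your plan needs and does not contain.) You also do not verify that the inserted zigzag yields a simple path nor that it preserves the distance-$1$ coverage of $X$ established in the first half. Until the ``staircase normal form'' lemma is formulated and proved, the claim $pe(G)\leq 1$ for biconvex graphs is unsupported. Your closing remark is correct and worth keeping: biconvex graphs may contain asteroidal triples (cf.\ the graph in \cref{fig:biconvexAT}), so the gap cannot be closed by an appeal to \cref{th:ATfreeecc1}.
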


G\'omez and Guti\'erez observed in their paper that the adjacency matrix, or a variation of it, of the biconvex graphs and a subclass of AT-free graphs have the \emph{consecutive ones property} (for columns), shortened by \emph{C1P}, meaning that there exists a permutation of its rows that places the 1s consecutively in every column. 
They asked the following natural question: is there a deeper connection between the C1P and the path eccentricity? 
The motivation for this question is reinforced by a tour of the literature. Firstly, Fulkerson and Gross \cite{IncidenceMA} showed the equivalence between graphs whose dominant clique vs. vertex incidence matrix has the C1P and the interval graphs. A graph is an \emph{interval graph} if it can be represented in such a way that each vertex corresponds to a closed interval on the real line, and two vertices are adjacent if and only if their corresponding intervals intersect. 
They insist on the importance of those graphs in genetic theory and provide an algorithm to determine if a given matrix has the C1P in $O(n^2)$ (where $n$ is the number of rows). 
Moreover Gardi showed that a graph is a unit interval graph if and only if its augmented adjacency matrix has the C1P~\cite[Thm 1]{GardiInterval}. A graph is a \emph{unit interval graph} if it has an interval representation where all the intervals have unit length. 
Interval graphs form a subclass of AT-free graphs so they have path eccentricity at most 1 by Theorem~\ref{th:ATfreeecc1}. Secondly, Chen showed that a graph is biconvex if and only if its adjacency matrix has the C1P~\cite[Thm 6]{ChenBiconv}. Therefore, by Theorem~\ref{thm:GGconvex}, it has path eccentricity bounded by 1 too. 
We contribute to extend the understanding of this connection and prove that a generalization of the natural definition of the C1P on graphs implies bounded path eccentricity.

\subsection*{Our results} 

\begin{figure}
    \centering
    \includegraphics[scale=0.4]{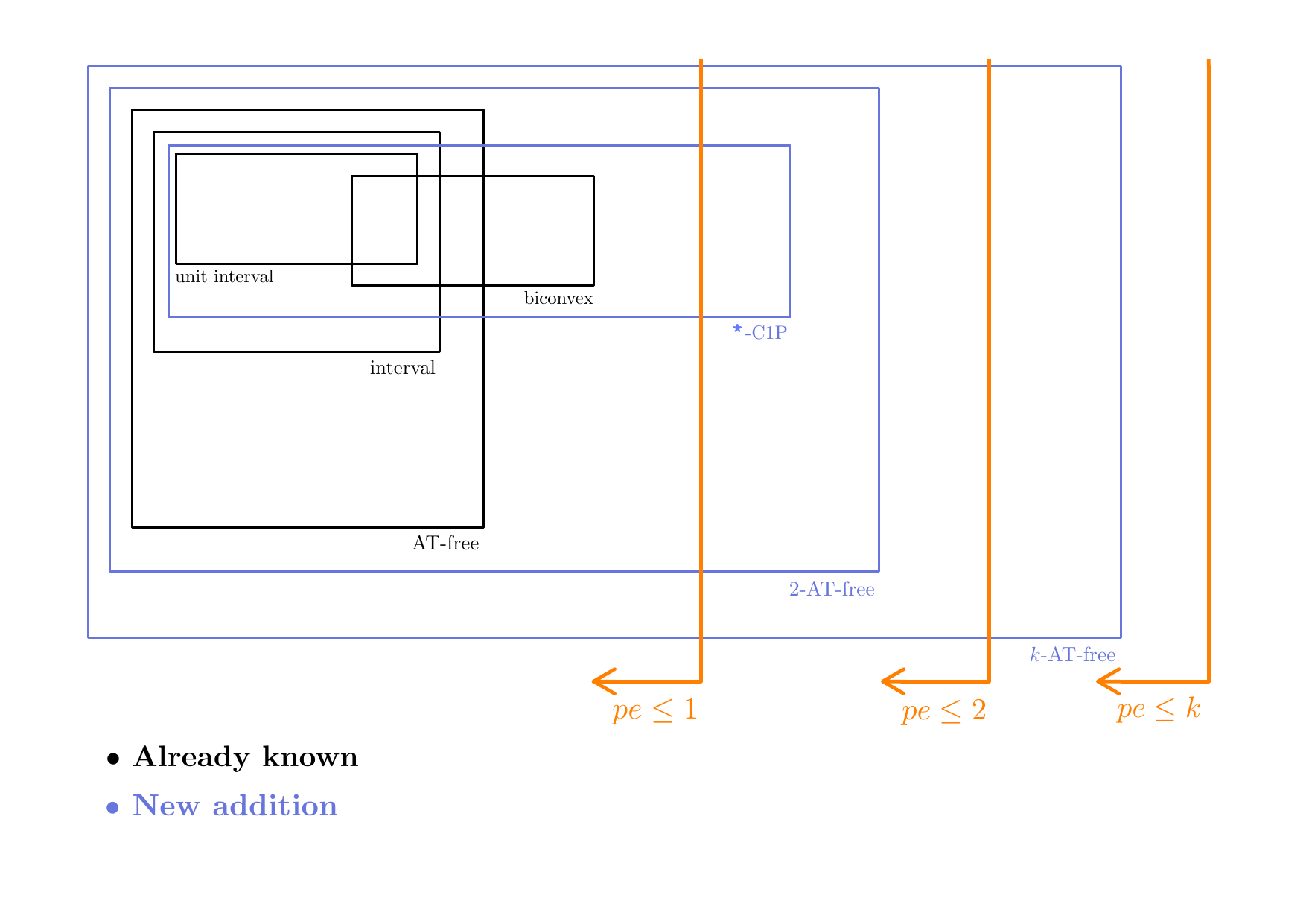}
    \caption{Summary of the path eccentricity of some classes of graphs.} 
    \label{fig:pe_world}
\end{figure}

We generalize 
\cref{th:ATfreeecc1} to \emph{$k$-asteroidal triple-free} graphs (or \emph{$k$-AT-free} graphs for short), a generalization of AT-free graphs introduced by Machado and de Figueiredo~\cite{kATfreeMachadoFigueiredo}. They are defined as graphs without \emph{$k$-asteroidal triples}, which are sets of three vertices such that each pair of vertices is joined by a path that avoids the neighborhood at distance $k$ of the third. Thus,  AT-free graphs correspond to $1$-AT-free graphs. We prove the following theorem.
\begin{restatable}{thm}{kATfree}
  \label{thm:kATfree_pek}
    For every $k\geq 1$, if a graph $G$ is $k$-AT-free then $pe(G)\leq k$.
\end{restatable}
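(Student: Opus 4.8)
The plan is to mimic the structure of the classical AT-free argument (Theorem~\ref{th:ATfreeecc1}) but at the coarser scale dictated by the parameter $k$. The goal is to produce a path $P$ such that every vertex of $G$ lies within distance $k$ of $P$. The natural candidate is a path that is \emph{maximal} in a suitable ordering sense, so I would first look for an analogue of the notion of a ``dominating pair'' or an endpoint-extremal path. Concretely, I would consider a shortest path $P$ between two vertices $x$ and $y$ that are chosen to be extremal for some well-ordered quantity (for instance, a BFS-layering or a moplex/LexBFS-type endpoint), so that $P$ cannot be ``pushed'' any further without increasing that quantity. The point of such extremality is to guarantee that if some vertex $w$ were at distance more than $k$ from all of $P$, then $\{x,y,w\}$ would form a configuration violating the $k$-AT-free hypothesis.

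The key steps, in order, would be: (i) formalize the right extremal choice of path $P = x \leadsto y$ (most likely a shortest path whose endpoints maximize pairwise distance, or are selected by a double-sweep BFS); (ii) suppose for contradiction that some vertex $w$ satisfies $d(w,P) > k$, i.e. $w$ lies outside the closed neighborhood at distance $k$ of every vertex of $P$; (iii) use this assumption together with the structure of $P$ to build three paths certifying that $\{x,y,w\}$ is a $k$-asteroidal triple, namely an $x$--$y$ path (the path $P$ itself, or a detour) avoiding $N^k[w]$, an $x$--$w$ path avoiding $N^k[y]$, and a $y$--$w$ path avoiding $N^k[x]$; (iv) conclude that this contradicts $G$ being $k$-AT-free, so no such $w$ exists and $pe(G)\le k$. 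A subtlety is that one must also handle the base cases/connectivity and verify the three avoidance conditions \emph{simultaneously}, which is exactly where the extremality of the endpoints must be exploited.

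The hard part will be step (iii): constructing the three certifying paths with the correct avoidance properties at distance $k$ rather than distance~$1$. In the AT-free case, the neighborhoods being avoided are single closed neighborhoods, and the dominating-pair machinery of Corneil--Olariu--Stewart does the heavy lifting; here each ``forbidden zone'' $N^k[\cdot]$ is a thick ball, so a path that merely avoids a vertex no longer automatically avoids its $k$-ball. I expect the main obstacle to be arguing that the $x$--$w$ and $y$--$w$ paths can be routed so as to stay outside $N^k[y]$ and $N^k[x]$ respectively; this presumably requires choosing $x,y$ so that $w$, together with a carefully selected nearest vertex of $P$, sits at distance strictly greater than $k$ from both endpoints, and then invoking the extremal/geodesic structure of $P$ to keep the detours from entering the forbidden balls. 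A clean way to manage this may be to pass to a $k$-th power or $k$-blow-up reduction, relating $k$-AT-freeness of $G$ to AT-freeness of an auxiliary graph and distance-$k$ domination to distance-$1$ domination there, though one must check such a reduction preserves exactly the right invariants rather than merely bounding them.
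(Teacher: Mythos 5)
Your overall shape (take an extremal path, assume some vertex $w$ is at distance more than $k$ from it, derive a $k$-AT) matches the paper's, but two of your specific choices leave a genuine gap that your own step (iii) cannot close. The main one is that the triple you aim to exhibit is wrong: $\{x,y,w\}$ with $x,y$ the endpoints of the chosen path is in general \emph{not} a $k$-AT, however the endpoints are chosen. If the vertex of $P$ nearest to $w$ lies within distance $k$ of $x$ (say $w$ ``hangs off'' $P$ near the end $x$), then every $y$--$w$ path passes through $N^k[x]$, and no diametral or double-sweep extremality of $x,y$ rules this out; even in the AT-free case a diametral pair need not be a dominating pair, which is why Corneil--Olariu--Stewart need heavy machinery there. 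The paper sidesteps this entirely: the $k$-AT it produces is $\{u',v',w\}$, where $u'$ (resp.\ $v'$) is a vertex at distance exactly $k$ from the endpoint $u$ (resp.\ $v$) that is \emph{private} to that endpoint, i.e.\ outside the distance-$k$ neighborhood of the rest of the path and of the connector from $w$ to $P$. You have no analogue of $u'$ and $v'$, and without them the three avoidance conditions cannot be verified simultaneously.

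This is tied to your second problem: the extremal object is the wrong one. The paper does not take a shortest path between extremal vertices; it takes an arbitrary path $P$ maximizing $\vert N^{k}[P]\vert$ and, among those, one of minimum length. Maximality of coverage is what turns every ``improving path'' (a path containing $V(P)$ whose $k$-neighborhood also reaches $w$) into a contradiction, and minimality of length is precisely what forces the private vertices $u',v'$ to exist (otherwise deleting an endpoint would give a shorter path with the same coverage). Neither property holds for a diametral geodesic. Finally, the fallback you mention --- reducing to AT-freeness of the $k$-th power $G^{k}$ --- does not go through: $k$-AT-freeness of $G$ does not imply AT-freeness of $G^{k}$, since paths in $G^{k}$ avoiding a ball need not lift to paths in $G$ avoiding it, and conversely a $1$-dominating path in $G^{k}$ re-expands only to a walk in $G$, so the invariant you need is not preserved in either direction.
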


We also address G\'omez and Guti\'erez's question about the relation between the C1P and path eccentricity. 

In order to generalize the classes of graphs whose adjacency or augmented adjacency matrix have the C1P,
we introduce the notion of graphs having the \emph{partially augmented consecutive ones property}, noted \emph{*-C1P}. 
A graph satisfies this property if a specific variant of its adjacency matrix has the C1P. 
Formally, a $n$-vertex graph $G$ has the *-C1P if there exists an adjacency matrix $(A_{i,j})_{(i,j) \in [n]^2}$ of $G$ and there exists $(A^*_{i,j})_{(i,j) \in [n]^2}$ with $A^*_{i,j} = A_{i,j}$ for all $i \neq j$ and $A^*_{i,i} \in \{0,1\}$ for all $i$, such that $(A^*_{i,j})_{(i,j) \in [n]^2}$ has the C1P.
This can be rephrased as the possibility of ordering the vertices in such a way that either the open or the closed neighborhood of each vertex is consecutive in the ordering (see Section~\ref{sec:preli}).


 We are to prove the following structural statement:

\begin{restatable}{thm}{twoATfree}\label{thm:mC1P_SAT}
    If a graph $G$ has the *-C1P, then it has no 2-AT.
\end{restatable}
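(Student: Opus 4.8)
The plan is to argue by contradiction from the ordering supplied by the *-C1P. Fix an ordering $v_1,\dots,v_n$ of the vertices and a choice of diagonal so that, for every vertex $v$, the set of $1$'s in its column is an interval of the ordering; equivalently, either $N(v)$ or $N[v]$ is an interval. Call $v$ \emph{closed} if its column carries a $1$ on the diagonal (so $N[v]$ is an interval) and \emph{open} otherwise. The first elementary fact I would record is that if $v$ is open then $N(v)$ is an interval not containing the position of $v$, so \emph{all} neighbours of $v$ lie strictly on one side of $v$ in the ordering. Now suppose for contradiction that $G$ contains a $2$-AT $\{a,b,c\}$. Since each defining path contains its endpoints, all three pairwise distances are at least $3$. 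Relabel so that $a$, $b$, $c$ occur in this left-to-right order, and set out to contradict the existence of an $a$--$c$ path avoiding $N_2[b]$.

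Write $J_b$ for the neighbourhood interval of $b$ (equal to $N[b]$ when $b$ is closed). Because $d(a,b),d(b,c)\ge 3$, both $a$ and $c$ avoid $N[b]$, hence lie strictly outside $J_b$ on opposite sides. Any $a$--$c$ path $P$ avoiding $N_2[b]\supseteq N[b]$ therefore keeps all of its vertices off the positions of $J_b$, so it must contain an edge $uw$ that \emph{jumps} $J_b$, i.e.\ with $u$ to the left of $J_b$ and $w$ to the right. This first case is clean: if $u$ is closed, then $N[u]$ is an interval containing the positions of both $u$ and $w$, hence the whole segment between them, hence $b$; thus $u\sim b$ and $u\in N_2[b]$, contradicting the choice of $P$. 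The same holds if $w$ is closed. So on any jump edge both endpoints are open.

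The remaining case is the heart of the matter, and where I expect the real work. If $u$ and $w$ are both open then, being adjacent with $u$ left of $w$, $u$ must have all its neighbours to the right and $w$ all its neighbours to the left. Avoiding a common neighbour with $b$ (which would place $u$ or $w$ in $N_2[b]$) pushes these neighbourhoods entirely past $J_b$: every neighbour of $u$ lies to the right of $J_b$ and every neighbour of $w$ to its left. Feeding this back into $P$, each of its vertices can only leave its own side of $J_b$ by another jump, so $P$ must \emph{alternate} sides, every edge being a jump; in particular the endpoints are constrained, with $a$ having all its neighbours to the right of $J_b$ and $c$ all its neighbours to the left.

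To close this case I would leave the single path $P$ and bring in the other two asteroidal paths. The point is that $a$'s only route out of its side is ``across'' $J_b$, yet the $a$--$b$ path must simultaneously avoid $N_2[c]$, and symmetrically for the $b$--$c$ path and $N_2[a]$; combined with the rigidity just obtained and the pairwise-distance-$\ge 3$ conditions, I expect these requirements to be jointly unsatisfiable, producing the contradiction. The sub-case in which $b$ is itself open should go through along the same lines, using the one-sided interval $N(b)$ in place of $N[b]$ together with the same distance bookkeeping. The main obstacle is thus precisely the ``both endpoints open'' alternating configuration: unlike the closed case it is locally consistent with the *-C1P, and ruling it out appears to demand the global interaction of all three asteroidal paths rather than an argument about a single jump edge.
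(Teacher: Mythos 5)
There is a genuine gap, and you point to it yourself. Your setup is sound and partially overlaps with the paper's: you correctly note that $d(a,b),d(b,c),d(a,c)\geq 3$, and your ``jump edge'' analysis is valid --- for an edge $uw$ of an $a$--$c$ path avoiding $N^2[b]$ with $u$ and $w$ positioned on opposite sides of the neighborhood interval $J_b$, neither endpoint can be closed (else its closed-neighborhood interval would swallow $b$), and once both are open their neighborhood intervals are pushed entirely past $J_b$, forcing the whole path to alternate and giving $N(a)$ entirely to the right of $J_b$ and $N(c)$ entirely to the left. This recovers a weaker cousin of the paper's \cref{claim:SAT_config} ($\max_\cleq(N(c))\cleq a\cleq b\cleq c\cleq \min_\cleq(N(a))$). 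But the proof stops there: the final contradiction, which you say you ``expect'' to follow from the joint unsatisfiability of the three asteroidal paths, is precisely where all the work is. In the paper this step occupies the entire second half of the argument and requires machinery you have not developed: \cref{lem:order_Puv} (monotonicity of the even-indexed vertices of an \emph{induced} path, and the fact that for even length the whole interval $[u,v]_\cleq$ lies in $N_\cleq[P]$) and \cref{lem:path_neighborhood}, which are applied to $P_{ab}$ and $P_{bc}$ --- after passing to induced paths and tracking the parity of their lengths --- to squeeze $N(b)$ into the impossible position $\max_\cleq(N(b))\cleq a\cleq c\cleq\min_\cleq(N(b))$. Your jump argument gives no control over where $N(b)$ or the interior vertices of the other two paths sit relative to $a$ and $c$, so there is no route to the contradiction from what you have established.

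A second, smaller gap: the sub-case where $b$'s column is open is not ``along the same lines.'' If $N(b)$ is an interval lying entirely on one side of $b$ (say to the right), then $a$ lies to the left of $J_b$ but $c$, which only needs to avoid the \emph{positions} of $N(b)$, may sit between $b$ and $J_b$, i.e.\ on the \emph{same} side of $J_b$ as $a$. Then the $a$--$c$ path need not contain any jump edge and your entire case analysis is vacuous. Handling this would require a separate argument, which again suggests that working with the one-sided/two-sided dichotomy of columns is less robust than the paper's uniform treatment via induced-path parity.
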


Combining this result with Theorem~\ref{thm:kATfree_pek}, we deduce the following bound on the path eccentricity of graphs satisfying the *-C1P.
\begin{cor}
     If a graph $G$ has the *-C1P then $pe(G)\leq 2$. 
\end{cor}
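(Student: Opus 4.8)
The plan is to read the *-C1P as a geometric condition and then to show that a $2$-asteroidal triple cannot survive it. First I would fix an ordering $v_1,\dots,v_n$ of $V(G)$ witnessing the *-C1P, and for each vertex $x$ record the maximal block of consecutive $1$'s in its column as an interval of positions $I(x)=[\ell(x),r(x)]$. By definition this block is exactly $N(x)$ when the chosen diagonal entry of $x$ is $0$ (call $x$ \emph{open}) and exactly $N[x]$ when it is $1$ (call $x$ \emph{closed}); in either case $I(x)\subseteq N[x]$ and $N[x]\subseteq I(x)\cup\{\mathrm{pos}(x)\}$. The facts I will use repeatedly are: if $\mathrm{pos}(x)$ lies strictly between two neighbours of $x$ then $x$ is closed and its whole interval contains $\mathrm{pos}(x)$; and if a position $q$ lies in $I(x)$, then the vertex at $q$ is adjacent to $x$.

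Next I would set up the contradiction. Suppose $\{a,b,c\}$ is a $2$-AT and order it so that $\mathrm{pos}(a)<\mathrm{pos}(b)<\mathrm{pos}(c)$. Since $a$ and $c$ are the endpoints of a path avoiding $N_2[b]$, we have $a,c\notin N_2[b]\supseteq N[b]$, so $a$ sits strictly left and $c$ strictly right of $b$'s block, while every vertex of the $a$--$c$ path avoids $N_2[b]$. Walking this path from $a$ to $c$ it must pass the position $\mathrm{pos}(b)$, so it contains a \emph{crossing edge} $uv$ with $\mathrm{pos}(u)<\mathrm{pos}(b)<\mathrm{pos}(v)$ and $u,v\notin N_2[b]$.

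The core step is a short interval computation on this crossing edge. If $u$ is closed then $I(u)=N[u]$ contains $\mathrm{pos}(u)$ and $\mathrm{pos}(v)$, hence the whole interval $[\mathrm{pos}(u),\mathrm{pos}(v)]$, hence $\mathrm{pos}(b)$, so $u$ is adjacent to $b$ and $u\in N_2[b]$, a contradiction; the symmetric statement handles $v$ closed. If $u$ is open but some neighbour of $u$ occupies a position inside $b$'s block, then that neighbour lies in $N[b]$ and witnesses $u\in N_2[b]$, again a contradiction. Pushing these cases through (together with the mirror images for $v$), the only crossing edges that survive are those for which \emph{both} endpoints are open with their entire neighbourhood strictly on the far side of $b$'s block: $u$ open with all neighbours to the right of the block and $v$ open with all neighbours to its left.

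The remaining step is the crux, and I expect it to be the main obstacle. To eliminate these ``reach-over'' crossing edges I would apply the previous paragraph to the first crossing edge met from $a$ and to the last one met before $c$: the reach-over condition forces the left endpoint of the first crossing to have no left neighbour along the path, so it must be $a$ itself, and symmetrically the right endpoint of the last crossing must be $c$. Thus, if no crossing edge already yields a contradiction, then $a$ is open with every neighbour to the right of $b$'s block and $c$ is open with every neighbour to its left (and $a,c$ are non-adjacent, since $d(a,c)>2$). At this point I would finally bring in the other two conditions of the triple: because all neighbours of $a$ lie beyond $b$ on the right and all neighbours of $c$ lie beyond it on the left, any $b$--$a$ path and any $b$--$c$ path are forced to cross $b$'s block repeatedly, and tracing those crossings with the same interval computation---now measured against $N_2[a]$ and $N_2[c]$ rather than $N_2[b]$---should produce a vertex of $N_2[a]$ on the $b$--$c$ path, or of $N_2[c]$ on the $a$--$b$ path, contradicting the definition of the $2$-AT. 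This is exactly where the distance-two neighbourhoods (the ``$2$'' in $2$-AT) and the simultaneous use of all three avoiding paths are needed; the case where $b$ itself is open requires the same computation applied to the position-span of $N[b]$, with minor extra care for the possible gap at $\mathrm{pos}(b)$.
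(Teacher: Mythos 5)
Your proposal tacitly splits the corollary the same way the paper does: everything you write is aimed at showing that a graph with the *-C1P contains no $2$-AT, and the bridge from ``$2$-AT-free'' to ``$pe(G)\leq 2$'' is left implicit. That bridge is exactly \cref{thm:kATfree_pek}, a theorem with a substantial proof of its own, so it must at least be invoked explicitly --- nothing in your argument ever addresses path eccentricity. For the $2$-AT-freeness itself (the content of \cref{thm:mC1P_SAT}) your route is genuinely different from the paper's: the paper works with \emph{induced} avoiding paths and two parity lemmas (\cref{lem:order_Puv,lem:path_neighborhood}), whereas you work with arbitrary avoiding paths and a ``crossing edge'' computation on the C1P blocks. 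The part you carry out is sound: a crossing edge $uv$ of a path avoiding $N^2[b]$, with $\mathrm{pos}(u)<\mathrm{pos}(b)<\mathrm{pos}(v)$, does force both endpoints to be open with all neighbours strictly beyond $\mathrm{pos}(b)$ and off $b$'s block, and the first/last-crossing argument correctly identifies $a$ and $c$ as such endpoints. This is an elementary and rather pleasant substitute for the paper's \cref{claim:SAT_config} --- except that it proves something strictly weaker: you localize $N(a)$ and $N(c)$ only relative to $\mathrm{pos}(b)$, while the paper's claim pins them down relative to $c$ and $a$ themselves ($\max_\cleq(N(c))\cleq a$ and $c\cleq\min_\cleq(N(a))$), and that extra strength is obtained there by a further non-trivial sub-argument.

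The genuine gap is the final step, which you yourself flag as the crux. ``Tracing those crossings \dots should produce a vertex of $N^2[a]$ on the $b$--$c$ path'' is a hope, not an argument: the configuration you have reached (both $a$ and $c$ open, with neighbourhoods on opposite sides of $\mathrm{pos}(b)$) is not self-contradictory, and the mere fact that the $a$--$b$ and $b$--$c$ paths oscillate around $\mathrm{pos}(b)$ does not by itself push them into $N^2[c]$ or $N^2[a]$. What actually closes the paper's proof is a statement about $N(b)$: using the neighbour of $a$ on $P_{ab}$ and the neighbour of $c$ on $P_{bc}$, one shows $c\cleq\min_\cleq(N(b))$ and $\max_\cleq(N(b))\cleq a$ simultaneously, which sandwiches $N(b)$ impossibly. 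Your sketch never locates $N(b)$ at all, and to run your crossing computation against $N^2[a]$ or $N^2[c]$ you would first need to know where the blocks of $a$ and $c$ sit relative to $\mathrm{pos}(c)$ and $\mathrm{pos}(a)$ --- precisely the strengthening you have not established. As written the argument does not close; it needs either that strengthening followed by a sandwich on $N(b)$, or some other concrete mechanism in place of ``should produce''.
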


These results widen the knowledge of the path eccentricity of graphs as summarized in Figure~\ref{fig:pe_world}.
Observe that the *-C1P is a more restrictive property than being 2-AT-free. For example a cycle of 5 vertices has no 2-AT but does not have the *-C1P. Therefore, the upper bound on the path eccentricity for graphs with the *-C1P is not necessarly tight.
Since the class of graphs with the *-C1P is a generalization of the classes of graphs whose adjacency or augmented adjacency matrix has the C1P, and since those have path eccentricity at most 1, we conjecture that this new class also has path eccentricity at most 1:

\begin{restatable}{conj}{last}
  \label{thm:mC1P_pe1}
     If a graph $G$ has the *-C1P then $pe(G)\leq 1$.  
\end{restatable}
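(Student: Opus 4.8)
The plan is to work directly from the *-C1P ordering and build an explicit $1$-dominating path; we cannot simply invoke \cref{th:ATfreeecc1}, because *-C1P graphs generalize biconvex graphs and these need not be AT-free. (Biconvex graphs properly contain the bipartite permutation graphs, which are exactly the AT-free bipartite graphs, so a biconvex graph that is not a bipartite permutation graph carries an asteroidal triple.) First I would fix an ordering $v_1 \prec \cdots \prec v_n$ and the diagonal choice witnessing the *-C1P, and classify each vertex as \emph{closed} (its closed neighborhood is consecutive) or \emph{open} (its open neighborhood is consecutive). The starting observation is that this classification gives every vertex a controlled domination target inside the order: if $v$ is closed then $N[v]$ is exactly the set of vertices occupying a block of consecutive positions containing $v$, so $v$ is dominated by any path meeting that block; if $v$ is open then, since $v \notin N(v)$, the consecutive block of $N(v)$ lies entirely on one side of $v$, so $v$ is dominated only by a path that contains $v$ itself or meets that one-sided block.

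Second, I would construct the path by a left-to-right sweep. Maintain a path $P$ whose vertices have strictly increasing positions, together with a frontier $f$, the largest position such that every vertex in positions $1,\dots,f$ is already dominated by $P$. At each step, from the current endpoint I would append the neighbor that pushes $f$ as far right as possible, exactly as in the greedy ``farthest reach'' construction for interval graphs. The two invariants to maintain are (i) \emph{connectivity}: each appended vertex is adjacent to the previous one, which for closed vertices follows because their position-blocks overlap at the frontier; and (ii) \emph{domination}: no vertex to the left of $f$ is ever skipped. Closed vertices are handled as in the unit interval case (their blocks behave like overlapping intervals), and the one-sided, biconvex-type behaviour is handled in the spirit of the construction behind \cref{thm:GGconvex}.

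The hard part will be the open vertices, and this is where I expect the real obstacle to lie. An open vertex $v$ is dominated only through $v$ itself or through its one-sided neighborhood, and that neighborhood need not be adjacent to $v$ in the order: there may be a \emph{gap} of non-neighbors between $v$ and its block of neighbors. Consequently a greedy sweep can advance the frontier past a right-facing open vertex without having included any of its neighbors, or strand a left-facing open vertex near the right end away from the path, and repairing this may conflict with the connectivity invariant. The key lemma to establish is that whenever the frontier crosses an open vertex, the path can be rerouted to capture one of its neighbors without backtracking; I would prove this by combining the consecutiveness of neighborhoods with the structural restriction coming from \cref{thm:mC1P_SAT} (absence of a $2$-AT), which should forbid precisely the configurations of two open vertices whose gaps would otherwise force the path to split. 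Showing that these local repairs can be carried out simultaneously and coherently along the whole sweep is the crux of the argument.
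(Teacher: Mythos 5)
The statement you are trying to prove is stated in the paper only as a \emph{conjecture} (Conjecture~\ref{thm:mC1P_pe1}); the paper proves no such result and only establishes the weaker bound $pe(G)\leq 2$, via Theorem~\ref{thm:mC1P_SAT} combined with Theorem~\ref{thm:kATfree_pek}. So there is no proof in the paper to compare yours against, and your proposal would, if completed, settle an open problem. As written, however, it is not a proof: you yourself flag that the treatment of open vertices is ``where I expect the real obstacle to lie'' and that carrying out the local repairs ``simultaneously and coherently along the whole sweep is the crux of the argument.'' That crux is exactly the content of the conjecture, and it is left entirely unproven. The ``key lemma'' you would need --- that whenever the frontier passes a right-facing open vertex whose neighborhood block is separated from it by a gap of non-neighbors, the path can be rerouted to capture a neighbor without violating connectivity or abandoning already-dominated vertices --- is asserted, not established, and the suggestion that 2-AT-freeness ``should forbid precisely'' the bad configurations is a hope rather than an argument (2-AT-freeness only controls triples of vertices pairwise avoiding distance-2 neighborhoods, which does not obviously exclude two conflicting open-vertex gaps on the same side of the sweep).

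A secondary concern: your invariant that the path's vertices have strictly increasing positions is a strong additional constraint that you have not justified. The paper's Lemma~\ref{lem:order_Puv} shows only that the \emph{even-indexed} vertices of an induced path form a monotone sequence; consecutive vertices of a path in a *-C1P graph can and do zigzag in the ordering (see, e.g., the biconvex graph of Figure~\ref{fig:biconvexAT}, where any dominating path alternates between the two sides of the bipartition). Restricting to monotone paths may make a $1$-dominating path impossible to find even when one exists. Your framing of the problem is reasonable and the identification of open vertices with detached neighborhood blocks as the essential difficulty is accurate, but the proposal is an outline of a research programme, not a proof of the statement.
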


\paragraph{Paper organization} In \cref{sec:preli} we set up our notations, definitions and introduce all new notions. In \cref{sec:k-AT-free} we prove our general result on $k$-AT-free graphs. In \cref{sec:fullmixed} we prove that graphs having the *-C1P are 2-AT-free.


\section{Preliminaries}\label{sec:preli}

In this paper, every graph $G=(V(G),E(G))$ (or $(V,E)$ if the context is clear) is finite, simple and undirected with $V$ the set of vertices of $G$ and $E$ the set of edges of $G$. Observe that if a graph is not connected, its path eccentricity is unbounded. Therefore we will study only connected graphs in this paper.
\bigskip

Given two graphs $G$ and $H$, the graph $H$ is a \emph{subgraph} of $G$ if $V(H)\subseteq V(G)$ and $E(H)\subseteq E(G)$.
$H$ is an \emph{induced subgraph} of $G$ if $V(H)\subseteq V(G)$ and two vertices of $H$ are adjacent in $H$ if and only if they are adjacent in $G$.

A path $P$ is a graph with vertex set $V(P)=\{v_0, \ldots, v_\ell\}$ for some $\ell \geq 0$, such that $v_i$ is adjacent to $v_{i+1}$, for every $0\leq i < \ell$. We call $\ell$ its \emph{length} and $v_0,v_\ell$ its \emph{extremities}. 
A \emph{cycle of length $\ell\geq 3$}, denoted $C_\ell$, consists in a path of length $\ell-1$ with an edge between its extremities.
For two vertices $u,v$ in $V(G)$, we say that $G$ has an (induced) path of length $\ell$ between $u$ and $v$ if $G$ admits an (induced) subgraph isomorphic to a path of length $\ell$ with extremities $u,v$.

For convenience and when there is no ambiguity, we will not distinguish between a set of vertices and the subgraph it induces.
For example, for a vertex $v$ in a path $P$, we write $P\setminus v$ to denote the subgraph of $P$ induced by $V(P)\setminus \{v\}$.

\bigskip
The \emph{distance} between two vertices $u$ and $v$ in $G$, denoted by $d(u,v)$, is the minimum length of a path between $u$ and $v$.
Given a subgraph $H$ of $G$ and a vertex $u$ in $G$, the distance between $u$ and $H$ (denoted $d(u,H)$) is the minimum distance between $u$ and a vertex of $H$, thus $d(u,H)=\min\limits_{v\in V(H)} d(u,v)$.

The \emph{eccentricity} of a path $P$ of $G$ is the maximal distance between $P$ and vertices of $G$, and is noted $ecc(P)\coloneqq \max\{d(u,P):u\in V\}$. The  \emph{path eccentricity} of $G$ is defined as the minimum of the eccentricity over all paths of $G$, i.e. $pe(G)\coloneqq \min\{ecc(P):P \text{ path of }G \}$. 

\bigskip
Given a set $S$ of vertices of $G$, we denote by $N[S]$ the closed neighborhood of $S$, which is the union of $S$ and all the vertices adjacent to a vertex in $S$. 
The open neighborhood of $S$, denoted $N(S)$, corresponds to $N[S]\setminus S$.
More generally, for $k\geq 1$, the neighborhood at distance $k$ of $S$, denoted $N^k[S]$, is the set of all vertices at distance at most $k$ from $S$.
For convenience, if $S = \{v\}$ for some $v\in V(G)$, we write $N(v)$ (resp. $N[v]$ and $N^k[v]$) instead of $N(\{v\})$ (resp. $N[\{v\}]$ and $N^k[\{v\}]$).

A graph is \emph{bipartite} if its vertex set can be partitioned into two sets such that two vertices are adjacent only if they are not in the same set.

A \emph{claw}, also denoted $K_{1,3}$ in the literature, is a bipartite graph $(X\cup Y, E)$ with $\vert X\vert=1$, $\vert Y \vert=3$ and the maximum number of edges. For $k\geq 1$, a \emph{$k$-subdivided-claw} is a claw where each edge is replaced by a path of length $k$. In particular, a claw is a 1-subdivided-claw.

\subsection{On asteroidal triples}
This part introduces the notion of $k$-asteroidal triples which will be used in Section~\ref{sec:k-AT-free} and in Section~\ref{sec:mixed}.

An \emph{asteroidal triple} is a set of three vertices such that each pair of vertices is joined by a path that avoids the closed neighborhood of the third.
A graph $G$ is said to be \emph{asteroidal triple-free}, or \emph{AT-free} for short, if it does not contain any asteroidal triple, i.e. for every set of three of vertices of $G$ there is a pair of them such that every path between them intersects the closed neighborhood of the third.

A generalization of this class of graphs was introduced by Machada and de Figueirdo~\cite{kATfreeMachadoFigueiredo} in the context of the existence of a class of graphs with arbitrarily large cycles for which breadth first search would always return high-eccentricity vertices.
A \emph{$k$-asteroidal triple}, noted $k$-AT, is a set of three vertices such that each pair of vertices is joined by a path that avoids the neighborhood at distance $k$ of the third. A graph is \emph{$k$-AT-free} if it does not contain a $k$-AT. 
Note that $k$-subdivided-claws and cycles of size between $3k$ and $3k+2$ are examples of $k$-AT-free graphs admitting $(k-1)$-AT. Moreover, any graph containing a $k$-subdivided-claw or a cycle of size $3k$ as an induced isometric\footnote{$H$ is an \emph{isometric} subgraph of $G$ if the distances between the vertices of $H$ are preserved in $G$.} subgraph can not be $(k-1)$-AT-free, and is therefore $k_0$-AT-free for $k_0 \geq k$.

\subsection{On the consecutive ones property}
This part focuses on the notion of C1P and is not required for Section~\ref{sec:k-AT-free}.
We are interested in the following matrix property, that we will translate into a graph property.

\begin{defi*}
    A matrix is said to have the \emph{consecutive ones property} (for columns) if it exists a permutation of its rows that places the 1s consecutively in every column.
\end{defi*}

We define three sets of matrices associated to an $n$-vertex graph $G$, each matrix having $n$ rows and $n$ columns, the set of \emph{adjacency matrices}, the set of \emph{augmented adjacency matrices}\footnote{Also called \emph{neighborhood matrices}.}, 
and the set of \emph{partially augmented adjacency matrices} of $G$. Given an ordering $V(G) =\{u_1, u_2, \dots , u_n\}$ of the vertices, we define one matrix for each of the three set, these three matrices differ only on the diagonal. For each $1\leq i,j \leq n$, $i\neq j$, all three matrices have a 1 in row $i$ column $j$ if and only if $u_i, u_j$ are adjacent, 0 otherwise. On the diagonal, adjacency, augmented adjacency and partially augmented adjacency matrices contain respectively only 0s, only 1s, or arbitrary values in $\{0,1\}$.

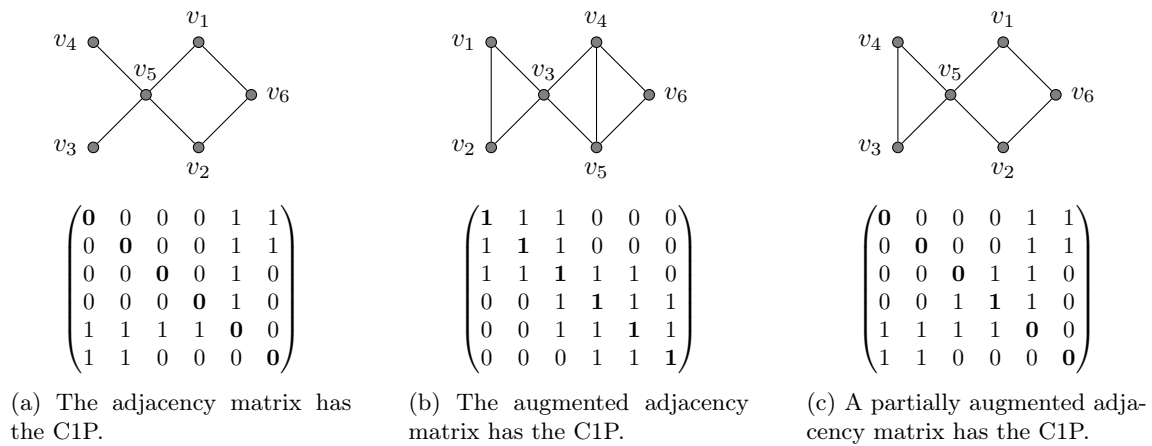
\begin{figure}[!htb]
\centering
\begin{subfigure}{.3\textwidth}
  \centering
    \begin{tikzpicture}[scale=0.7]
    \tikzstyle{vertex}=[circle, draw, fill=black!50,
                        inner sep=0pt, minimum width=4pt]
    \node[vertex] (v3) [label=left:$v_3$] at (0,0) {};
    \node[vertex] (v4) [label=left:$v_4$] at (0,2) {};
    \node[vertex] (v5) [label=above:$v_5$] at (1,1) {};
    \node[vertex] (v1) [label=above:$v_1$] at (2,2) {};
    \node[vertex] (v6) [label=right:$v_6$] at (3,1) {};
    \node[vertex] (v2) [label=below:$v_2$] at (2,0) {};
    \draw (v4) -- (v5) -- (v1) -- (v6) -- (v2) -- (v5) -- (v3);
\end{tikzpicture}
  \label{fig:sub11}
\end{subfigure}%
\hfill
\begin{subfigure}{.3\textwidth}
  \centering
    \begin{tikzpicture}[scale=0.7]
    \tikzstyle{vertex}=[circle, draw, fill=black!50,
                        inner sep=0pt, minimum width=4pt]
    \node[vertex] (v2) [label=left:$v_2$] at (0,0) {};
    \node[vertex] (v1) [label=left:$v_1$] at (0,2) {};
    \node[vertex] (v3) [label=above:$v_3$] at (1,1) {};
    \node[vertex] (v4) [label=above:$v_4$] at (2,2) {};
    \node[vertex] (v6) [label=right:$v_6$] at (3,1) {};
    \node[vertex] (v5) [label=below:$v_5$] at (2,0) {};
    \draw (v3)--(v1) -- (v2) -- (v3) -- (v4) -- (v5) -- (v6) -- (v4); 
    \draw (v3) -- (v5); 
\end{tikzpicture}
  \label{fig:sub21}
\end{subfigure}\hfill
\begin{subfigure}{.3\textwidth}
  \centering
    \begin{tikzpicture}[scale=0.7]
    \tikzstyle{vertex}=[circle, draw, fill=black!50,
                        inner sep=0pt, minimum width=4pt]
    \node[vertex] (v3) [label=left:$v_3$] at (0,0) {};
    \node[vertex] (v4) [label=left:$v_4$] at (0,2) {};
    \node[vertex] (v5) [label=above:$v_5$] at (1,1) {};
    \node[vertex] (v1) [label=above:$v_1$] at (2,2) {};
    \node[vertex] (v6) [label=right:$v_6$] at (3,1) {};
    \node[vertex] (v2) [label=below:$v_2$] at (2,0) {};
    \draw (v4) -- (v5) -- (v1) -- (v6) -- (v2) -- (v5) -- (v3) -- (v4);
\end{tikzpicture}
  \label{fig:sub1}
\end{subfigure}

\medskip
  \begin{subfigure}{.3\textwidth}
  \centering
  \resizebox{.7\textwidth}{!}{$
  \begin{pmatrix}
     \bf{0} & 0 & 0 & 0 & 1 & 1\\
     0 & \bf{0} & 0 & 0 & 1 & 1\\
     0 & 0 & \bf{0} & 0 & 1 & 0\\
     0 & 0 & 0 & \bf{0} & 1 & 0\\
     1 & 1 & 1 & 1 & \bf{0}  & 0\\
     1 & 1 & 0 & 0 & 0 & \bf{0} 
  \end{pmatrix}
  $}
  \caption{The adjacency matrix has the C1P.}
  \label{fig:sub_adjacency}
\end{subfigure}\hfill
\begin{subfigure}{.3\textwidth}
  \centering
  \resizebox{.7\textwidth}{!}{$
  \begin{pmatrix}
     \bf{1} & 1 & 1 & 0 & 0 & 0\\
     1 & \bf{1} & 1 & 0 & 0 & 0\\
     1 & 1 & \bf{1} & 1 & 1 & 0\\
     0 & 0 & 1 & \bf{1} & 1 & 1\\
     0 & 0 & 1 & 1 & \bf{1}  & 1\\
     0 & 0 & 0 & 1 & 1 & \bf{1} 
  \end{pmatrix}
  $}
  \caption{The augmented adjacency matrix has the C1P.}
  \label{fig:sub_augmented}
\end{subfigure}\hfill
\begin{subfigure}{.3\textwidth}
  \centering
  \resizebox{.7\textwidth}{!}{$
  \begin{pmatrix}
     \bf{0} & 0 & 0 & 0 & 1 & 1\\
     0 & \bf{0} & 0 & 0 & 1 & 1\\
     0 & 0 & \bf{0} & 1 & 1 & 0\\
     0 & 0 & 1 & \bf{1} & 1 & 0\\
     1 & 1 & 1 & 1 & \bf{0}  & 0\\
     1 & 1 & 0 & 0 & 0 & \bf{0} 
  \end{pmatrix}
  $}
  \caption{A partially augmented adjacency matrix has the C1P.}
  \label{fig:sub_partially}
\end{subfigure}
\caption{Examples of graphs and the associated matrices with ordering function $\mu(v_i)=i, \forall i$.}
\label{fig:examplesC1P}
\end{figure}


\medskip

An \emph{ordering function} of the vertices of a graph $G$ is an injective function $\mu:V(G) \rightarrow \ZZ$. It gives a total order on $V(G)$, denoted by $(\cleq_{\mu},V)$, such that $u\cleq_{\mu} v$ iff $\mu(u)\leq \mu(v)$. 
The interval $[u,v]_{\cleq_{\mu}}$ denotes the set of preimage of the integers in $[\mu(x), \mu(v)]$ and $N_{\cleq_{\mu}}(v)$ denotes the image of $N(v)$ by $\mu$. When the ordering function used is clear or irrelevant, we will simplify the notation $\cleq_{\mu}$ by $\cleq$. 

\medskip

Observe that, given a total ordering of $V(G)$ there are a unique adjacency and a unique augmented adjacency, and reciprocally, both these matrices induce a natural total order on the vertices of $G$, the first vertex for the order is the one corresponding to the first row, the second in the order correspond to the second row and so on. 

Therefore, we can equivalently define a graph whose adjacency matrix (resp. augmented adjacency matrix) has the consecutive ones property as a graph for which there exists an ordering function of its vertices sending the open (resp. closed) neighborhood of every vertex to a consecutive set.
For examples of such graphs, see \cref{fig:sub_adjacency} and \cref{fig:sub_augmented}.

\medskip

Similarly, a partially augmented adjacency matrix of $G$ corresponds to a unique total order, and a total order corresponds to partially augmented adjacency matrices that differ only on the diagonal. 

We say that a graph $G$ has the \emph{partially augmented consecutive ones property}, noted *-C1P, if there exists a partially augmented adjacency matrix of $G$ which has the C1P. In other words, a graph has the *-C1P if there exists an ordering of its vertices sending the open or closed neighborhood of every vertex to a consecutive set. See \cref{fig:sub_partially} for an example of graph with the *-C1Pbut for which there is no ordering such that its (augmented) adjacency matrix has the C1P.

Observe that if a graph $G$ admits an adjacency matrix (resp. augmented adjacency matrix) which has the C1P, then $G$ has the *-C1P.
However the other direction is not true, as we saw with \cref{fig:sub_partially}.
There exists families of graphs, such as the ladder on $2k$ vertices with the last rung being a $K_4$ (see \cref{fig:ladderK4}), for which it is needed to partially augment the adjacency matrix for it to have the C1P. 

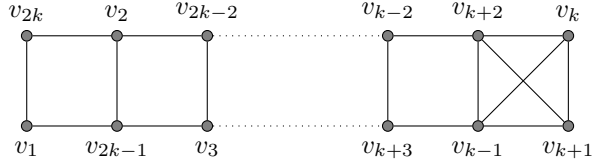
\begin{figure}[!htb]
\centering
    \begin{tikzpicture}[scale=0.6]
    \tikzstyle{vertex}=[circle, draw, fill=black!50,
                        inner sep=0pt, minimum width=4pt]
    \node[vertex] (v1) [label=below:$v_1$] at (0,0) {};
    \node[vertex] (v2k) [label=above:$v_{2k}$] at (0,2) {};
    \node[vertex] (v2k_1) [label=below:$v_{2k-1}$] at (2,0) {};
    \node[vertex] (v2) [label=above:$v_{2}$] at (2,2) {};
    \node[vertex] (v3) [label=below:$v_3$] at (4,0) {};
    \node[vertex] (v2k_2) [label=above:$v_{2k-2}$] at (4,2) {};

    \node[vertex] (vk_3) [label=below:$v_{k+3}$] at (8,0) {};
    \node[vertex] (vk_2) [label=above:$v_{k-2}$] at (8,2) {};
    \node[vertex] (vk_1) [label=below:$v_{k-1}$] at (10,0) {};
    \node[vertex] (vk2) [label=above:$v_{k+2}$] at (10,2) {};
    \node[vertex] (vk1) [label=below:$v_{k+1}$] at (12,0) {};
    \node[vertex] (vk) [label=above:$v_{k}$] at (12,2) {};
    
    \draw (v2) -- (v2k) -- (v1) -- (v2k_1) -- (v2) -- (v2k_2) -- (v3) -- (v2k_1);
    \draw (vk2) -- (vk) -- (vk1) -- (vk_1) -- (vk2) -- (vk_2) -- (vk_3) -- (vk_1) -- (vk);
    \draw (vk1)--(vk2);
    \draw[dotted] (v2k_2) -- (vk_2);
    \draw[dotted] (v3) -- (vk_3);
\end{tikzpicture}
\caption{Family of graphs having the *-C1P, with ordering function $\mu(v_i)=i, \forall i$.}
\label{fig:ladderK4}
\end{figure}

\section{Path eccentricity of $k$-AT-free graphs}\label{sec:k-AT-free}

The first class of graphs in which Cockayne et al.~\cite{linearAlgoPC_Cockayne} and Slater~\cite{LocatingCP} looked for a central path were the trees.
Recall that the \emph{$k$-subdivided-claw} is a tree consisting of three disjoint paths on $k$ vertices, and one vertex (its \emph{center}) adjacent to one extremity of each path. 
One can quickly observe that a $k$-subdivided-claw has path eccentricity $k$, hence trees have unbounded path eccentricity. Moreover, in order to bound the path eccentricity of an hereditary graph class by $k$, it is necessary that the class avoids the $k$-subdivided-claws as an induced subgraph. 

Corneil, Olariu and Stewart~\cite{ATfree} studied a now well known class of graphs, that happens to avoid 2-subdivided-claws: the asteroidal triple-free graphs.
They showed in particular that in every AT-free graph, there is a path with eccentricity at most 1. 



In this section, we generalize this result to the $k$-AT-free graphs.
Recall that a $k$-AT-free graph is a graph such that for every set of three vertices there is a pair of them such that every path between them intersects the neighborhood at distance $k$ of the third.


We prove that those graphs have path eccentricity at most $k$. The proof works towards a contradiction using a minimal counter-example. Suppose the graph is $k$-AT-free and have path eccentricity at least $k+1$, we then choose a shortest path of minimal length among those maximizing their neighborhood at distance $k$, and either find a $k$-AT or show that this path can be extended into a path with a strictly larger neighborhood at distance $k$. 

\kATfree*

\begin{proof}

    Let $G$ be a $k$-AT-free graph. Arguing by contradiction, suppose that $G$ has path eccentricity at least $k+1$. Let $P$ be a path that maximize $\vert N^{k}[P]\vert$, and among those, one of minimum length. Let $u,v$ be its extremities. Since $pe(G)\geq k+1$, there is a vertex $w$ of $G$ such that $w\notin N^{k}[P]$. 
    
    Observe that if we can find a path $P'$ such that $V(P)\subseteq V(P')$ and $w\in N^{k}[P']$, then $|N^{k}[P']| > |N^{k}[P]|$, which contradicts the maximality of $P$. Let us call such a path $P'$ an \emph{improving} path: finding an improving path in $G$ implies a contradiction.

    Let $a$ be a vertex of $P$ such that $d(w,a)=d(w,P)$, and let $P_{wa}$ be a shortest path from $w$ to $a$. Note that by definition of $a$, $P$ and $P_{wa}$ intersect only in $a$. Let $P_{ua}$ (respectively $P_{va}$) be the restriction of $P$ between $a$ and $u$ (respectively $v$). 
    
    Observe that $a\neq u$, otherwise $P_{wa}\cup P$ would be an improving path. For the same reason, $a\neq v$, and thus $P$ has length at least $2$.

    \begin{claim}\label{cl:u'v'}
        There is a vertex $u'$ at distance $k$ from $u$ such that $u'$ does not belong to $N^{k}[(P\setminus u) \cup P_{wa}]$; and similarly, 
        there is a vertex $v'$ at distance $k$ from $v$ that does not belong to $N^{k}[(P\setminus v) \cup P_{wa}]$.
    \end{claim}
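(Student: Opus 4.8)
The plan is to prove the statement about $u'$; the one about $v'$ is identical after exchanging the roles of $u$ and $v$ (and of $P_{ua}$ and $P_{va}$). So I want a vertex $u'$ with $d(u',u)=k$ lying at distance more than $k$ from every vertex of $(P\setminus u)\cup P_{wa}$. I would extract this witness from the two extremal properties of $P$: that it maximizes $|N^{k}[P]|$ and, subject to that, has minimum length.

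First I would use minimality to localize a candidate near $u$. Since $P\setminus u$ is a path strictly shorter than $P$, it cannot be a maximizer, so $|N^{k}[P\setminus u]|<|N^{k}[P]|$ and there is a vertex $x\in N^{k}[P]\setminus N^{k}[P\setminus u]$, i.e.\ $d(x,u)\le k$ while $d(x,y)>k$ for every $y\in P\setminus u$. A pleasant bonus is that this forces $d(x,u)$ to be \emph{exactly} $k$: writing $u_1$ for the neighbour of $u$ on $P$, we have $u_1\in P\setminus u$, so $d(x,u_1)\ge k+1$, while $d(x,u_1)\le d(x,u)+1\le k+1$; hence $d(x,u_1)=k+1$ and $d(x,u)=k$. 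Thus any such private witness already satisfies $d(u',u)=k$ and $d(u',P\setminus u)>k$, leaving only the separation from $P_{wa}$ to be arranged.

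To control the branch $P_{wa}$ I would bring in maximality together with the defining property of $a$, namely that $a$ realises $d(w,P)$, so $d(w,p)\ge d(w,a)$ for all $p\in P$ and, along the shortest path $P_{wa}=p_0p_1\cdots p_m$ (with $p_0=w$, $p_m=a$, $m=d(w,a)\ge k+1$), one gets $d(u,p_i)\ge m-i$ for every $i$. For vertices $p_i$ far from $a$ this immediately yields $d(x,p_i)\ge (m-i)-k>k$, and the vertex $a=p_m$ is handled by $d(x,a)>k$; the only dangerous vertices are those of $P_{wa}$ lying within distance about $k$ of $a$. To rule these out I would compare $P$ with the genuine path $P_{wa}\cup P_{va}$ (from $w$ to $v$): by maximality its $k$-neighbourhood is no larger than $N^{k}[P]$, while it already contains $w\notin N^{k}[P]$, which forces the existence of vertices within distance $k$ of $P_{ua}$ yet beyond distance $k$ of both $P_{va}$ and $P_{wa}$. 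The goal is to show that the private witness produced by minimality can be taken to be one of these, so that all three distance conditions hold simultaneously.

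The hard part, and the step I expect to be the real obstacle, is precisely this simultaneity: minimality pins a witness to $u$ but says nothing about $P_{wa}$, whereas maximality separates a witness from $P_{wa}$ but does not localise it to the endpoint $u$, and the computation above shows that a generic vertex private to $u$ can indeed sit within distance $k$ of the part of $P_{wa}$ near $a$. So neither extremal property alone suffices, and the argument must interleave them — choosing the witness extremally with respect to both constraints at once (or, equivalently, showing that if no vertex meets all three conditions then one can reroute $P$ through $P_{wa}$ into a path whose $k$-neighbourhood strictly contains $N^{k}[P]\cup\{w\}$, contradicting maximality). Making that rerouting yield an honest path rather than the three-legged structure $(P\setminus u)\cup P_{wa}$, while preserving all of $N^{k}[P]$, is the crux I would spend the most care on.
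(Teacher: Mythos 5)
Your first step coincides with the paper's: minimality of the length of $P$ among the paths maximizing $|N^{k}[P]|$ yields a vertex $u'\in N^{k}[P]\setminus N^{k}[P\setminus u]$, and the triangle inequality through the neighbour of $u$ on $P$ forces $d(u',u)=k$ and $d(u',P\setminus u)\geq k+1$. That part is correct and is exactly how the paper begins.

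The genuine gap is the second half, which you explicitly leave open: showing that this same $u'$ can be taken outside $N^{k}[P_{wa}]$. The paper resolves it by the very ``rerouting'' you mention in passing, and the obstacle you flag --- that the reroute might produce a three-legged structure rather than an honest path --- disappears once one notices that the detour is attached at the \emph{endpoint} $u$ of $P$, not at $a$. Concretely: suppose $u'\in N^{k}[P_{wa}]$, let $y$ be a vertex of $P_{wa}$ closest to $u'$, let $P_{yu'}$ be a shortest $y$--$u'$ path (length at most $k$) and $P_{u'u}$ a shortest $u'$--$u$ path (length $k$). Since $d(u',P\setminus u)\geq k+1$, neither of these short paths can meet $P\setminus u$ (any vertex of $P\setminus u$ on them would be within distance $k$ of $u'$), and $y\neq a$ for the same reason; choosing $y'$ to be the vertex of $P_{yu'}\cap P_{u'u}$ closest to $u$ along $P_{u'u}$, the concatenation of $P_{wa}$ from $w$ to $y$, of $P_{yu'}$ from $y$ to $y'$, and of $P_{u'u}$ from $y'$ to $u$ is a $w$--$u$ path internally disjoint from $P$. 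Appending $P$ to it at $u$ gives a single path containing $V(P)\cup\{w\}$, i.e.\ an improving path, contradicting maximality. Your alternative suggestion --- comparing $P$ with $P_{wa}\cup P_{va}$ --- produces witnesses far from $P_{va}\cup P_{wa}$ but gives no reason for them to be localized at $u$, and, as you acknowledge yourself, you have no argument that one vertex satisfies all three distance conditions simultaneously. Without the detour construction above, the claim is not established.
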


    \begin{proofclaim}
        Notice first that there has to be a vertex $u'$ that belongs to $N^{k}[P]$ and does not belong to $N^{k}[P\setminus u]$, otherwise $N^{k}[P] \subseteq N^{k}[P\setminus u]$ and $P\setminus u$ is a strictly shorter path than $P$ with the same neighborhood, which contradicts the definition of $P$. So $u'$ has to be at distance $k$ of $u$.

        Suppose now that $u'$ belongs to $N^{k}[P_{wa}]$.  Let $y$ be a vertex of $P_{wa}$ closest to $u'$. Then $P_{yu'}$, the path realizing this shortest distance, has length at most $k$. Note that, by definition of $y$, $P_{yu'}$ intersects $P_{wa}$ only in $y$.
        Observe also that $P_{yu'}$ does not intersect $P\setminus u$, otherwise there would be a path of length at most $k$ between $u'$ and $P\setminus u$. 
        Let $P_{u'u}$ be a shortest path from $u'$ to $u$, and let $y'$ be the vertex in the intersection between $P_{yu'}$ and $P_{u'u}$ that is the closest from $u$ along $P_{u'u}$. Let $P_{wy'u}$ be the path resulting from the union of the restriction of $P_{wa}$ between $w$ and $y$, the restriction of $P_{yu'}$ between $y$ and $y'$, and the restriction of $P_{u'u}$ between $y'$ and $u$. This path is internally disjoint from $P$. Thus the path resulting from the union of $P_{wy'u}$ and $P$ is an improving path, a contradiction. Therefore, $u'\notin N^{k}[P_{wa}]$.
        
        By symmetry of the roles of $u$ and $v$, we prove the existence of $v'$ similarly.
    \end{proofclaim}

    Let us now show that $\{u',v',w\}$ is a $k$-AT, \textit{i.e.} for each pair of vertices in the set, there is a path between this pair avoiding the neighborhood at distance $k$ of the third vertex of the set.

    Let $P_{u'u}$ be a shortest path from $u'$ to $u$, let $P_{v'v}$ be a shortest path from $v'$ to $v$, and let $P_{u'v'}=P \cup P_{u'u} \cup P_{v'v}$, $P_{wu'}=P_{wa} \cup P_{ua} \cup P_{u'u}$ and $P_{wv'}= P_{wa} \cup P_{va} \cup P_{v'v}$.

    Suppose that $w\in N^{k}[P_{u'v'}]$, then $P_{u'v'}$ is an improving path since $V(P)\subseteq V(P_{u'v'})$, which is impossible. Thus $P_{u'v'}$ is a path that avoids $N^{k}[w]$.

    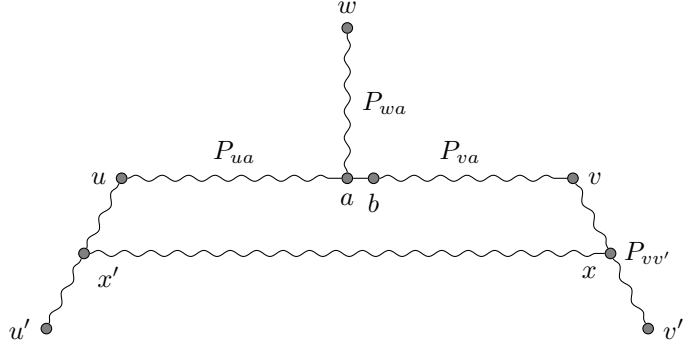
\begin{figure}[h]
    \centering
    \begin{tikzpicture}
    \tikzstyle{vertex}=[circle, draw, fill=black!50,
                        inner sep=0pt, minimum width=4pt]
    \node[vertex] (u) [label=left:$u$] at (0,0) {};
    \node[vertex] (v) [label=right:$v$] at (6,0) {};
    \node[vertex] (a) [label=below:$a$] at (3,0) {};
    \node[vertex] (b) [label=below:$b$] at (3.35,0) {};
    \node[vertex] (w) [label=above:$w$] at (3,2) {};
    \node[vertex] (u') [label=left:$u'$] at (-1,-2) {};
    \node[vertex] (v') [label=right:$v'$] at (7,-2) {};
    \node[vertex] (x) [label=below left:$x$] at (6.5,-1) {};
    \node[vertex] (x') [label=below right:$x'$] at (-0.5,-1) {};
    \draw (a) -- (b);
    \draw  [snake=snake, segment amplitude=.4mm](b) --  (v) -- (x) -- (v');
    \draw  [snake=snake, segment amplitude=.4mm] (x') -- (x);
    \draw [snake=snake, segment amplitude=.4mm] (u') -- (x') -- (u) -- (a) -- (w);
    \node at (3.5,1) {$P_{wa}$};
    \node at (1.5,0.35) {$P_{ua}$};
    \node at (4.5,0.35) {$P_{va}$};
    \node at (7,-1) {$P_{vv'}$};

\end{tikzpicture}
    \caption{Representation of the structure used in the proof of \cref{thm:kATfree_pek}.}
    \label{fig:kATconstruction}
    \end{figure}

    Suppose now that $u' \in N^{k}[P_{wv'}]$. Recall that $P_{wv'} = P_{wa} \cup P_{va} \cup P_{v'v}$ and by Claim \ref{cl:u'v'}, $u' \notin N^{k}[(P_{ua}\setminus u) \cup P_{va} \cup P_{wa}]$, it follows that $u'$ is in $N^{k}[P_{v'v}]$. Let $x$ be a vertex of $P_{v'v}$ closest to $u'$. Then $P_{xu'}$, a shortest path between $x$ and $u'$, has length at most $k$. Also, by definition of $x$, $P_{xu'}$ intersects $P_{v'v}$ only in $x$. 
    Observe that $P_{xu'}$ does not intersect $P\setminus u$, since by Claim \ref{cl:u'v'} $u'$ is at distance at least $k+1$ from $P\setminus u$. Let $x'$ be the vertex in the intersection of $P_{u'u}$ and $P_{xu'}$, closest to $x$ along $P_{xu'}$. A representation of the structure is depicted in Figure \ref{fig:kATconstruction}.
    Let $P_{vu}$ be the union of the restriction of $P_{v'v}$ between $v$ and $x$, the restriction of $P_{xu'}$ between $x$ and $x'$ and the restriction of $P_{u'u}$ between $x'$ and $u$. From the previous observation, $P_{vu}$ is internally disjoint from $P$. 
    Finally, let $b$ be the vertex next to $a$ on the path $P_{va}$, and let $P_{bv}$ be the restriction of $P$ between $b$ and $v$ (potentially $b=v$ and $P_{bv}$ is a trivial path). Then $P'= P_{bv} \cup P_{vu} \cup P_{ua} \cup P_{wa}$ is a path with $V(P) \subseteq V(P')$ and $w \in V(P')$, so $P'$ is an improving path, which is impossible. So $P_{wv'}$ is a path at distance at least $k+1$ from $u'$.

    By symmetry of the roles of $u',v'$, we can prove similarly that $P_{wu'}$ avoids $N^{k}[v']$.
    Thus $\{u',v',w\}$ forms a $k$-AT, a contradiction.
\end{proof}

We note that this bound is tight as $k$-subdivided-claws are $k$-asteroidal triple-free.

This result gives us a powerful way to bound the path eccentricity of classes of graphs through their structure. It is applied in Section~\ref{sec:mixed} to show that graphs having the *-C1P have path eccentricity bounded by 2.


\section{Graphs having the partially augmented consecutive ones property}\label{sec:fullmixed}
In this section, we focus on the graphs having the *-C1P.  Having the C1P on the adjacency or augmented adjacency matrix ends up being quite restrictive on the structure of the graphs. Indeed, the first class corresponds exactly to the biconvex graphs~\cite{ChenBiconv} while the second corresponds exactly to the unit interval graphs~\cite{GardiInterval}, and they both have path eccentricity at most 1. Observe that the latter class is a subset of the AT-free graphs, which is not the case of the former, as shown in Figure~\ref{fig:biconvexAT}. In fact, they are quite different as the intersection of both properties, that is, the class of biconvex graphs that also admit a unit interval representation is very restrictied. It is exactly the class of path forests. Our interest lies in considering a larger class of graphs, we study a natural generalization of both of these classes, the *-C1P.

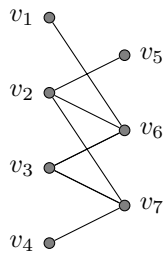
\begin{figure}[H]
    \centering
    \tikzstyle{vertex}=[circle, draw, fill=black!50,
                        inner sep=0pt, minimum width=4pt]
    \begin{tikzpicture}[scale = 0.25]
            \draw (0,12) node[vertex] (A1) [label=left:$v_1$] {}
            (0,8) node[vertex] (A2) [label=left:$v_2$] {}
            (0,4) node[vertex] (A3) [label=left:$v_3$] {}
            (0,0) node[vertex] (A4) [label=left:$v_4$] {}
            (4,10) node[vertex] (A5) [label=right:$v_5$] {}
            (4, 6) node[vertex] (A6) [label=right:$v_6$] {}
            (4, 2) node[vertex] (A7) [label=right:$v_7$] {};
            \draw (A1) -- (A6) -- (A3) -- (A7) -- (A2) -- (A5);
            \draw (A2)--(A6) -- (A3)--(A7)--(A4);
        \end{tikzpicture}
    \caption{A biconvex graph with the asteroidal triple $\{v_1,v_4,v_5\}$.}
    \label{fig:biconvexAT}
\end{figure}

In this section we first give some useful lemmas on the order of the vertices in the neighborhood of a path in a graph having the *-C1P.
They are then used to prove our main theorem of the section, that if a graph has the *-C1P, then it cannot have a 2-AT. It follows then, by Theorem~\ref{thm:kATfree_pek}, that their path eccentricity is at most 2. 


\subsection{Induced paths and *-C1P}\label{sec:techniC1P}


We will show that the consecutiveness on the neighborhoods implies a strong condition on the order of vertices in induced paths of the graphs, and thus also on induced cycles.
This lemma is a key result to understand the structure of graphs defined with the *-C1P.

\begin{lemma}\label{lem:order_Puv}
    Let $G$ be a graph with the *-C1P with the related ordering function $\mu$, and let $P_{uv}=z_0z_1z_2\dots z_{\ell-1}z_{\ell}$ be an induced path from $u=z_0$ to $v=z_\ell$ of length $\ell$ in $G$. Then we have, with $i \in \NN,$ 
    \begin{itemize}
        \item $(u, z_2, z_4,\dots)$ and $(v, z_{\ell -2}, z_{\ell -4}, \dots)$ are monotonic sequences according to $\cleq$;
        \item $[u,z_{2\lfloor \ell/2 \rfloor}]_{\cleq}\subseteq N_{\cleq}[V(P_{uv})]$ and $[z_{\ell -2\lceil \ell/2 \rceil},v]_{\cleq}\subseteq N_{\cleq}[V(P_{uv})]$;
        \item if $P_{uv}$ is of even length, then $[u,v]_{\cleq}\subseteq N_{\cleq}[V(P_{uv})]$.
    \end{itemize}
\end{lemma}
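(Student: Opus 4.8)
The plan is to reduce all three items to a single local exclusion property of induced paths under the *-C1P, and then to read off monotonicity and the two covering statements from it. Throughout I fix the ordering function $\mu$ witnessing the *-C1P, so that for every vertex $x$ at least one of $N(x)$, $N[x]$ is an interval of $(\cleq,V)$; I write $I_x$ for such a consecutive neighborhood of $x$ and record that in either case $I_x\subseteq N[x]$. This last inclusion is what lets me treat the open and closed cases uniformly.

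The key step I would isolate is the claim that, for any three consecutive vertices $z_a,z_{a+1},z_{a+2}$ of the path, no path vertex other than $z_a,z_{a+1},z_{a+2}$ lies in the interval $[z_a,z_{a+2}]_{\cleq}$. Indeed, $I_{z_{a+1}}$ is an interval containing both $z_a$ and $z_{a+2}$ (they are neighbors of $z_{a+1}$), so it contains all of $[z_a,z_{a+2}]_{\cleq}$; hence any path vertex $z_k$ in that interval lies in $I_{z_{a+1}}\subseteq N[z_{a+1}]$, i.e.\ $z_k$ equals or is adjacent to $z_{a+1}$, and since $P_{uv}$ is induced this forces $k\in\{a,a+1,a+2\}$. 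I expect this to be the crux of the whole lemma. A weaker exclusion -- ruling out only the immediate path-neighbors $z_{a-1},z_{a+3}$ -- is genuinely insufficient: one can place the remaining path vertices so as to break monotonicity while respecting it. It is precisely the strong statement, that \emph{no} other path vertex sits between $z_a$ and $z_{a+2}$, that makes the rest routine, and the fact that it is obtained from $I_{z_{a+1}}\subseteq N[z_{a+1}]$ alone is what removes the open/closed case split.

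From this claim I would deduce the first item. Consider $(u,z_2,z_4,\dots)$ and suppose for contradiction it has an interior strict local extremum at $z_{2i}$, say a local maximum, so $z_{2i-2}\cleq z_{2i}$ and $z_{2i+2}\cleq z_{2i}$ (the minimum case follows by reversing $\mu$). Whichever of $z_{2i-2},z_{2i+2}$ is larger then lies in the relevant interval and differs from its endpoints: if $z_{2i-2}\cleq z_{2i+2}$ then $z_{2i+2}\in[z_{2i-2},z_{2i}]_{\cleq}\setminus\{z_{2i-2},z_{2i}\}$, contradicting the claim for the triple $z_{2i-2}z_{2i-1}z_{2i}$; and if $z_{2i+2}\cleq z_{2i-2}$ then $z_{2i-2}\in[z_{2i+2},z_{2i}]_{\cleq}\setminus\{z_{2i+2},z_{2i}\}$, contradicting the claim for $z_{2i}z_{2i+1}z_{2i+2}$. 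Thus the subsequence has no interior extremum and is monotonic; applying the same argument to the odd-indexed subsequence (using the triples $z_{j-1}z_jz_{j+1}$ with $j$ even) gives monotonicity of $(v,z_{\ell-2},z_{\ell-4},\dots)$, regardless of the parity of $\ell$.

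Finally, the second and third items follow from monotonicity together with one covering observation: since $z_{2i}$ and $z_{2i+2}$ both lie in $I_{z_{2i+1}}\subseteq N[z_{2i+1}]$, the whole interval $[z_{2i},z_{2i+2}]_{\cleq}$ is contained in $N[z_{2i+1}]\subseteq N[V(P_{uv})]$. Because the even-indexed subsequence is monotonic, $[u,z_{2\lfloor \ell/2\rfloor}]_{\cleq}$ is exactly the union of the consecutive intervals $[z_{2i},z_{2i+2}]_{\cleq}$, so it is covered by $N[V(P_{uv})]$, and the statement from the $v$ side is identical. When $\ell$ is even one has $z_{2\lfloor \ell/2\rfloor}=v$, whence this union is $[u,v]_{\cleq}$ and the third item drops out at once. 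The only routine care needed is the degenerate bookkeeping when $z_{a-1}$ or $z_{a+3}$ does not exist, and the trivial short cases $\ell\le 1$.
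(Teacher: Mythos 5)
Your proposal is correct and rests on exactly the same mechanism as the paper's proof: the consecutiveness of $N[z_{a+1}]$ forces $[z_a,z_{a+2}]_{\cleq}\subseteq N[z_{a+1}]$, and inducedness of $P_{uv}$ then excludes all other path vertices from that interval. The paper packages this as a single induction on $i$ that builds the covering and the monotonicity simultaneously, whereas you isolate the local triple claim and derive monotonicity by excluding interior local extrema; this is a cosmetic reorganization of the same argument, and both versions are sound.
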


\begin{proof}
    Let us first prove by induction on $0\leq i \leq \frac{\ell}{2}$ that $[u, z_{2i}]_{\cleq} \subseteq N_{\cleq}[V(P_{uz_{2i}})]$, where $P_{uz_{2i}}$ is the restriction of $P_{uv}$ between $u$ to $z_{2i}$, and $(u, z_2, \dots , z_{2i})$ is a monotonic sequence according to $\cleq$.
    
    The statement is trivial for $i=0$. For $i=1$, the *-C1P of $G$ and $\{u,z_2\} \in N[z_1]$ give that for all $x$ with $z_0\cleq x \cleq z_2$, $x\in N[z_1]$ so $[u, z_2]_{\cleq} \subseteq N_{\cleq}[V(P_{uz_2})]$. On the other hand, the ordering function due to the *-C1P of $G$ gives a total order on $V(G)$. Therefore $u$ and $z_2$ are comparable and the sequence $(u, z_2)$ is monotonic according to $\cleq$.
    
    Suppose now that the induction hypothesis is true up to $i$ (with $i<\frac{\ell}{2}-1$). The *-C1P of $G$ and $\{z_{2i},z_{2i+2}\} \in N[z_{2i+1}]$ give that $[z_{2i},z_{2i+2}]_{\cleq}\subseteq N_{\cleq}[z_{2i+1}]$. 
    Combined with the first part of the induction hypothesis, we have $[u,z_{2i}]_{\cleq}\cup [z_{2i},z_{2i+2}]_{\cleq} \subseteq N_{\cleq}[V(P_{uz_{2i}})]\cup N_{\cleq}[z_{2i+1}] \subseteq N_{\cleq}[V(P_{uz_{2i+2}})]$. 
    
    The second part of the induction hypothesis gives us that $(u, z_2, \dots, z_{2i})$ is monotonic according to $\cleq$. The sequence $(u, z_2, \dots, z_{2i}, z_{2i+2})$ has to be monotonic too.
    Indeed, if it was not, then either $z_{2i+2}$ is inserted in the sequence $(u, z_2,\dots, z_{2i})$, so $\mu(z_{2i+2})\in [u,z_{2i}]_{\cleq}\subseteq N_{\cleq}[P_{uz_{2i}}]$ and then $P_{uv}$ is not induced, or $z_{2i+2}$ is such that $\mu(z_{2i+2}) \leq \mu(u)$ in particular we would have that  $\mu(u) \in [z_{2i+2},z_{2i}]_\cleq \subseteq  N_\cleq[z_{2i+1}]$ which would imply that $P_{uv}$ is again not induced.

    \medskip

    By symmetry of the roles of $u$ and $v$, we obtain a similar result on the alternating sequence starting from $v$, which completes the proof for the first two statements of the Lemma.

    
    At last, if $\ell$ is even, then $u$ and $v$ are in the same sequence and we then have $[u,v]_{\cleq}\subseteq N_{\cleq}[V(P_{uv})]$. 
\end{proof}

This lemma enables us to deduce a nice property on the cycles in the graphs with the *-C1P.

To demonstrate the usefulness of the above lemma, we provide a brief proof of the following proposition.

\begin{prop}\label{lem:mC1P_noC5}
    If a graph $G$ has the *-C1P then it is $C_{\geq 5}$-free.
\end{prop}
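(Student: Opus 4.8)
The statement to prove is that a graph $G$ with the *-C1P contains no induced cycle of length at least $5$. The natural strategy is to argue by contradiction: suppose $G$ has the *-C1P with related ordering function $\mu$, and that $G$ contains an induced cycle $C=z_0z_1\dots z_{\ell-1}$ with $\ell \geq 5$. The goal is to reach a contradiction by exploiting the monotonicity constraints forced by \cref{lem:order_Puv}, which controls how the vertices of an induced path must be laid out under $\cleq$.

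The first step is to split $C$ into induced paths so that \cref{lem:order_Puv} applies. Removing one edge, say the edge $z_{\ell-1}z_0$, yields the induced path $z_0z_1\dots z_{\ell-1}$, and removing a different edge yields another induced path; any choice of two consecutive vertices removed gives an induced path on the remaining vertices. Applying the lemma to such paths, I would extract the two alternating monotonic sequences along $C$ (the even-indexed and odd-indexed vertices). The key observation is that on a cycle these two alternating sequences must eventually \emph{wrap around} and meet, and a closed cycle cannot be consistently laid out so that both alternating chains are monotonic in $\cleq$ unless the cycle is short. Concretely, I expect to combine the "interval containment" conclusions $[u,z_{2\lfloor \ell/2\rfloor}]_\cleq \subseteq N_\cleq[V(P)]$ with the fact that $C$ is induced: if too many vertices fall into such an interval, one of them would be forced to be a neighbor of a non-adjacent vertex of $C$, contradicting that $C$ is induced.

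The cleanest route is probably to identify the $\cleq$-extreme vertices of $C$ (the minimum and the maximum under $\mu$) and analyze their two cycle-neighbors. Since $C$ is induced with $\ell\geq 5$, the minimum vertex $z_m$ has exactly two neighbors on $C$, and every other vertex of $C$ lies $\cleq$-above $z_m$; symmetrically for the maximum. Walking the alternating monotonic sequences from the minimum toward the maximum in both directions around the cycle, I would show that the wrap-around creates a vertex strictly inside an interval $[z_{2i},z_{2i+2}]_\cleq \subseteq N_\cleq[z_{2i+1}]$ that is not a cycle-neighbor of $z_{2i+1}$, contradicting inducedness. The length bound $\ell \geq 5$ is exactly what guarantees enough vertices exist to force this collision; for $\ell=3,4$ the alternating sequences are too short to produce a contradiction, which is consistent with $C_3$ and $C_4$ being permitted.

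The main obstacle I anticipate is the bookkeeping of the wrap-around: \cref{lem:order_Puv} is stated for a fixed induced path with designated extremities, so to deploy it around a cycle I must choose the right edge(s) to delete and correctly reconcile the two resulting monotonic sequences, keeping careful track of parity (the even/odd length distinction in the lemma) and of which interval each vertex lands in. Handling the parity of $\ell$ may require treating even and odd $\ell$ slightly differently, and the argument must ensure the "colliding" vertex is genuinely distinct from the endpoints of the interval so that the inducedness contradiction is real rather than vacuous.
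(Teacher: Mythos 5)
Your overall strategy --- cut the cycle into induced subpaths, apply \cref{lem:order_Puv} to get the alternating monotonic sequences, and derive a contradiction from the fact that they must wrap around --- is exactly the route the paper takes for long cycles. But as written the plan has two problems, one minor and one genuine.

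The minor one: deleting an \emph{edge} $z_{\ell-1}z_0$ from an induced cycle does not produce an induced path, since $z_0$ and $z_{\ell-1}$ remain adjacent in $G$; \cref{lem:order_Puv} requires induced paths, so you must delete a vertex (or two consecutive vertices), which shortens the paths you can work with by one. You do mention the two-consecutive-vertices variant, so this is fixable, but it matters for counting how long the monotonic sequences actually are.

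The genuine gap is the case $\ell=5$. Deleting a vertex from a $C_5$ leaves an induced path of length $3$, whose alternating sequences $(z_0,z_2)$ and $(z_3,z_1)$ have only two elements each; every two-element sequence is monotonic, so the wrap-around argument gives nothing, and the interval containments alone do not immediately produce the ``collision'' you describe. Your claim that ``the length bound $\ell\geq 5$ is exactly what guarantees enough vertices exist to force this collision'' is therefore not justified: the chaining of monotonic sequences (each of length at least $3$, overlapping in at least two vertices) only kicks in for $\ell\geq 6$, which is precisely how the paper argues there, separately for even and odd $\ell$. For $\ell=5$ the paper falls back on a direct exhaustive check that no vertex ordering of $C_5$ can make every open or closed neighborhood consecutive. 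Your proof would need to supply such a separate argument for $C_5$; without it the proposition is not established.
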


\begin{proof} \sloppy
    Let $G$ be a graph having the *-C1P and $\mu$ the related ordering function. Let $C=v_0v_1\dots v_{\ell-1}$ be an induced cycle of length $\ell\geq 3$. 
    By applying Lemma~\ref{lem:order_Puv} on the subpaths of length $\ell-1$ of $C$, we get that for every $i_0<\ell$ and every $0\leq j<\ell/2$, the sequence $(v_{i_0},v_{(i_0+2 \mod \ell)},\dots v_{(i_0+2j \mod \ell)})$ is monotonic. Moreover, if $\ell\geq 6$, this sequences have size at least 3, thus we can combined such sequences if they have at least two vertices in common.

    If $\ell$ is even and $\ell\geq 6$, then $(v_0, v_2, \dots, v_{\ell-2})$, and $(v_{\ell-2},v_0, v_2, \dots, v_{\ell-4})$ are monotonic, thus $(v_{\ell-2},v_0, v_2, \dots, v_{\ell-2})$ is a monotonic sequence, thus $v_{\ell-2} \cleq v_0 \cleq v_{\ell-2}$, a contradiction.

    If $\ell$ is odd and $\ell\geq 6$, then the following monotonic sequences have all size at least 3: $(v_0, v_2, \dots, v_{\ell-3})$, $(v_2, \dots, v_{\ell-3}, v_{\ell-1})$, $(v_4, \dots, v_{\ell-1}, v_1)$,  $(v_{\ell-1},v_1, v_3, \dots, v_{\ell-4})$, $(v_1, v_3, \dots, v_{\ell-2})$, and $(v_3, \dots, v_{\ell-2}, v_0)$. We can thus combined them and obtain the following monotonic sequence: $(v_0, v_2,\dots, v_{\ell-1} , v_1 ,\dots , v_{\ell-2} , v_0)$, and $v_0 \cleq v_1 \cleq v_0 $, a contradiction.

    It is easy to check by an exhaustive search that it is impossible to find an order on the vertices respecting the *-C1P if $\ell=5$. Thus $\ell \leq 4$.    
\end{proof}

This emphasizes that the AT-free graphs are not a subclass of graphs having the *-C1P. Indeed, a $C_5$ is $AT$-free but it has not the *-C1P. 

Using Lemma~\ref{lem:order_Puv}, we can prove the following lemma on the ordering of the neighborhood of the extremities of an induced path and a vertex avoiding it. 
For any set of vertices $S$, we note $\min_{\cleq} (S)$ (respectively $\max_{\cleq} (S)$) the smallest (respectively largest) element of $S$ according to $\mu$.

\begin{lemma}\label{lem:path_neighborhood}
    Let $P_{uv}$ be an induced path of odd length between $u$ and $v$ of a graph $G$ with the *-C1P. Let $x$ be a vertex of $G$ such that $x$ does not belong to $N[P_{uv}]$. Then:
    \begin{itemize}
        \item if $v \cleq u \cleq x$ then $\max_{\cleq}(N(u))\cleq x$ and $\max_{\cleq}(N(v))\cleq x$;
        \item if $x\cleq u \cleq v$ then $x \cleq \min_{\cleq}(N(u))$ and $x \cleq \min_{\cleq}(N(v))$;
        \item if $u \cleq x \cleq v$ then $\max_{\cleq}(N(v))\cleq x \cleq \min_{\cleq}(N(u))$.
    \end{itemize}
\end{lemma}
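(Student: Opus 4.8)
The plan is to write $P_{uv}=z_0z_1\cdots z_\ell$ with $u=z_0$, $v=z_\ell$ and $\ell=2m+1$ odd, and to feed \cref{lem:order_Puv} with the two even-length subpaths $z_0\cdots z_{2m}$ and $z_1\cdots z_{2m+1}$. Its third bullet then yields the two \emph{covered intervals} $[u,z_{2m}]_{\cleq}\subseteq N_{\cleq}[V(P_{uv})]$ and $[z_1,v]_{\cleq}\subseteq N_{\cleq}[V(P_{uv})]$. Since $x\notin N[P_{uv}]$, the vertex $x$ lies strictly outside both of these position-intervals; this single fact is what I will use to locate $x$ in every case. I will also use repeatedly that, by the *-C1P, for each of $u$ and $v$ either its open or its closed neighborhood is a $\cleq$-interval, and that $z_1\in N(u)$ and $z_{2m}\in N(v)$.

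I would dispatch the second item as the mirror image of the first: replacing $\mu$ by $-\mu$ preserves the *-C1P and all consecutiveness, reverses $\cleq$, turns the hypothesis $x\cleq u\cleq v$ into $v\cleq u\cleq x$, and exchanges $\min_{\cleq}$ with $\max_{\cleq}$, so the second item follows verbatim from the first. For the first item (so $v\cleq u\cleq x$), I prove $\max_{\cleq}(N(u))\cleq x$ as follows. If $N[u]$ is an interval, it contains $u\cleq x$ but not $x$, hence lies entirely to the left of $x$, so every neighbour of $u$ is $\cleq x$. If instead $N(u)$ is an interval $[\alpha,\beta]_{\cleq}$ (so $u\notin[\alpha,\beta]_{\cleq}$), suppose for contradiction $\max_{\cleq}(N(u))=\beta\succ x$; together with $u\cleq x$, $x\notin N(u)$ and $z_1\in N(u)$, this forces first $u\prec\alpha$ and then $x\prec\alpha$, placing $x$ strictly between $v$ and $z_1$, i.e. inside the covered interval $[z_1,v]_{\cleq}$, contradicting $x\notin N[P_{uv}]$. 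The bound $\max_{\cleq}(N(v))\cleq x$ is symmetric, now using $z_{2m}\in N(v)$ and the covered interval $[u,z_{2m}]_{\cleq}$.

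For the third item (so $u\cleq x\cleq v$) I first locate $x$ between the two endpoint-neighbours: since $[z_1,v]_{\cleq}$ is covered and $x\cleq v$, I get $x\prec z_1$, and since $[u,z_{2m}]_{\cleq}$ is covered and $u\cleq x$, I get $z_{2m}\prec x$. Now $N[u]$ cannot be an interval: it contains $u$ and $z_1$ with $u\prec x\prec z_1$, so an interval would swallow $x\in N[u]$, impossible. Hence by the *-C1P the open neighbourhood $N(u)$ is an interval $[\alpha,\beta]_{\cleq}$; it contains $z_1\succ x$ but excludes $x$ (and $u\prec x$), so it must lie entirely to the right of $x$, giving $x\cleq\alpha=\min_{\cleq}(N(u))$. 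The symmetric argument on $v$, using $z_{2m}\prec x$, forces $N(v)$ to be an interval lying entirely to the left of $x$, i.e. $\max_{\cleq}(N(v))\cleq x$, completing the case.

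I expect the main obstacle to be bookkeeping rather than depth: one must handle, for each endpoint, the two alternatives offered by the *-C1P (open versus closed neighborhood consecutive) and keep track of the orientation of the two monotonic subsequences of \cref{lem:order_Puv}. The unifying device that keeps this under control is precisely that $x$ must avoid both covered intervals $[u,z_{2m}]_{\cleq}$ and $[z_1,v]_{\cleq}$; in every case this one constraint pins down on which side of $x$ the relevant neighbour ($z_1$ or $z_{2m}$) sits, after which consecutiveness of the neighbourhood finishes the argument immediately.
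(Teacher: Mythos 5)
Your proof is correct and follows essentially the same route as the paper: both apply \cref{lem:order_Puv} to the two even-length subpaths obtained by deleting one endpoint of $P_{uv}$, then use the fact that either the open or the closed neighborhood of each endpoint is consecutive to push $x$ past all of $N(u)$ and $N(v)$. The paper's write-up is just a bit more compact, reading off directly from the covered interval $[u,z_{2m}]_\cleq$ on which side of $z_{2m}\in N(v)$ the vertex $x$ lies and then invoking consecutiveness once, rather than splitting explicitly into the open- and closed-neighborhood cases.
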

    
\begin{proof}
    Let $G$, $P_{uv}$ and $x$ be as defined in the lemma.

    Let $w_u$ and $w_v$ be the neighbor of respectively $u$ and $v$ on $P_{uv}$.
    Note that $P_{uw_v}$, the restriction of $P_{uv}$ between $u$ and $w_v$ is an induced path of even length so by Lemma~\ref{lem:order_Puv}, $[u,w_v]_\cleq \subseteq N_\cleq[P_{uw_v}]$. Since $x$ does not belong to $N_\cleq[P_{uw_v}]$, $x$ is either larger or smaller than all the vertices in $[u,w_v]_\cleq$.

    If $u \cleq x$, then $x$ is larger than all the vertices in $[u,w_v]_\cleq$, in particular $x$ is larger that $w_v\in N(v)$. Since $G$ has the *-C1P, either $N(v)$ or $N[v]$ is consecutive, and $x$ is larger than all the vertices of $N(v)$, in particular $\max_{\cleq}(N(v))$.

    Similarly, if $x \cleq u$, then $x$ is smaller than $w_v\in N(v)$ and by the *-C1P of $G$, $x$ is smaller than all the vertices of $N(v)$, in particular $\min_{\cleq}(N(v))$.

    Finally, with the symmetric analysis with $P_{w_uv}$, the restriction of $P_{uv}$ between $w_u$ and $v$, we obtain the relations with $\max_{\cleq}(N(u))$ and $\min_{\cleq}(N(u))$.
\end{proof}

\subsection{Main result}\label{sec:mixed}


Using the previous lemmas, we show that if our graph $G$ with the *-C1P had a 2-AT, there would be only a few ways to order those three vertices and their respective neighborhood, each yielding a contradiction. Remember that the graph represented in \cref{fig:biconvexAT} has the *-C1P but also contains an asteroidal triple, therefore the following theorem is tight.

\twoATfree*

\begin{proof}
    Let $G$ be a graph with the *-C1P and let $\mu$ be the relative ordering function. Assume for the sake of contradiction that there exists a 2-AT, $\{a,b,c\}$, where $a\cleq b \cleq c$. Then there are three induced paths in $G$, $P_{ab}$ from $a$ to $b$ (resp. $P_{ac}$ from $a$ to $c$ and $P_{bc}$ from $b$ to $c$) avoiding $N^2[c]$ (resp. $N^2[b]$ and $N^2[a]$).  

    
    \begin{claim}\label{claim:SAT_config}
        The vertices $a,b,c$ and their neighborhoods respect this ordering:
        \[\textstyle \max_\cleq (N(c))\cleq a \cleq b\cleq c\cleq \min_{\cleq} (N(a)).\]
    \end{claim}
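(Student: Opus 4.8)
The plan is to exploit the two structural lemmas on induced paths (Lemma~\ref{lem:order_Puv} and Lemma~\ref{lem:path_neighborhood}) after first normalizing the three witnessing paths. First I would record that a $2$-AT forces the three vertices to be pairwise far apart: since $P_{ab}$ contains both $a$ and $b$ and avoids $N^2[c]$, we get $d(a,c),d(b,c)\geq 3$, and symmetrically every pair of $\{a,b,c\}$ is at distance at least $3$; in particular they are pairwise non-adjacent and share no common neighbour. I would also replace each witnessing path by an induced subpath on the same vertex set, which still avoids the relevant $N^2[\cdot]$.

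Next I would pin down the parity of $P_{ac}$. If $P_{ac}$ had even length, Lemma~\ref{lem:order_Puv} would give $[a,c]_{\cleq}\subseteq N_{\cleq}[V(P_{ac})]$; as $a\cleq b\cleq c$ this puts $b\in N[P_{ac}]$, contradicting that $P_{ac}$ avoids $N^2[b]\supseteq N[b]$. Hence $P_{ac}$ is odd, and since $b\notin N[P_{ac}]$ and $a\cleq b\cleq c$, the third case of Lemma~\ref{lem:path_neighborhood} applied to $P_{ac}$ with $x=b$ yields the base bound $\max_{\cleq}(N(c))\cleq b\cleq\min_{\cleq}(N(a))$.

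It remains to improve $b$ to $a$ on the left and to $c$ on the right. Here I would first rule out that the closed neighbourhoods of $a$ or $c$ are the consecutive ones: if $N[c]$ were consecutive then, from $\max_{\cleq}(N(c))\cleq b\prec c$ and $c\in N[c]$, consecutiveness would force $b\in N[c]$, i.e.\ $b\sim c$, contradicting $d(b,c)\geq 3$; so $N(a)$ and $N(c)$ are open-consecutive. To prove $\max_{\cleq}(N(c))\cleq a$, suppose some $n\in N(c)$ satisfies $a\prec n$; by the base bound and $d(b,c)\geq 3$ we get $a\prec n\prec b$. Since $n\sim c$ and $P_{ab}$ avoids $N^2[c]$, no vertex of $P_{ab}$ lies within distance $1$ of $n$, so $n\notin N[P_{ab}]$. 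If $P_{ab}$ has even length this contradicts $[a,b]_{\cleq}\subseteq N[P_{ab}]$ from Lemma~\ref{lem:order_Puv}; the symmetric argument with $P_{bc}$ of even length gives $c\cleq\min_{\cleq}(N(a))$.

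The hard part will be the parities of $P_{ab}$ and $P_{bc}$. When exactly one of them is odd I expect a clean collapse: applying Lemma~\ref{lem:path_neighborhood} to the odd one (its first or second case, with $x=c$ or $x=a$) forces, together with the base bound, one of $N(a)$ or $N(c)$ to lie in a single point that cannot be a neighbour, hence to be empty, contradicting connectivity. The genuinely delicate case is when both $P_{ab}$ and $P_{bc}$ are odd: then the two lemmas only squeeze $N(a)\subseteq(b,c)$ and $N(c)\subseteq(a,b)$, which is consistent with the base bound but not yet with the claim. I would resolve this by analysing $P_{ac}$ itself: the localization above forces its even-indexed vertices to lie left of $b$ and its odd-indexed vertices right of $b$, so every edge of $P_{ac}$ straddles the position of $b$ in $\cleq$; for any internal vertex the consecutiveness of its \emph{closed} neighbourhood would then capture $b$, contradicting distance at least $3$ from $b$, so each such vertex is open-consecutive with neighbourhood entirely on one side of $b$, and I would extract the final contradiction from this rigid alternating structure together with $C_{\geq 5}$-freeness (Proposition~\ref{lem:mC1P_noC5}). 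This all-odd/straddling configuration is the main obstacle; the even and mixed-parity cases are routine once the base bound is in hand.
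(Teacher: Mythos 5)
Your base bound $\max_{\cleq}(N(c))\cleq b\cleq\min_{\cleq}(N(a))$ is exactly the paper's first step, and your treatment of the cases where at least one of $P_{ab}$, $P_{bc}$ has even length checks out (in the mixed-parity case one indeed pins $N(a)$ or $N(c)$ to a single forbidden position and refutes the configuration via connectivity). But the proof is not complete: the case where both $P_{ab}$ and $P_{bc}$ have odd length --- which you yourself flag as ``the main obstacle'' --- is left as a plan, not an argument. In that case Lemma~\ref{lem:path_neighborhood} confines $N(a)$ to $[b,c]_{\cleq}$ and $N(c)$ to $[a,b]_{\cleq}$, so the claimed ordering can only hold vacuously: you must refute this configuration outright, and you have not. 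Your observations about $P_{ac}$ (every edge straddles the position of $b$, every internal vertex is open-consecutive with its neighborhood on the opposite side of $b$) are correct but produce no contradiction as stated, and it is far from clear that one can be extracted from the structure of $P_{ac}$ together with $C_{\geq 5}$-freeness alone; this is precisely the hard part of the claim.

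The paper closes this case by a mechanism absent from your proposal: it applies Lemma~\ref{lem:path_neighborhood} not to $a,b,c$ themselves but to their \emph{neighbors on the witnessing paths}. Assuming $N(a)$ meets $[b,c]_{\cleq}$, the neighbor $w_a$ of $a$ on $P_{ab}$ lies in $[b,c]_{\cleq}$ and avoids $N[P_{bc}]$, forcing $P_{bc}$ odd and then $\max_{\cleq}(N(a))\cleq\min_{\cleq}(N(b))$ (using that $N(a)$ and $N(b)$ are disjoint); next, since one of $P_{ab}$ and its subpath $P_{w_ab}$ must have odd length --- a trick that sidesteps your parity dichotomy entirely --- one gets $\max_{\cleq}(N(b))\cleq c$; finally the neighbor $w_b$ of $b$ on $P_{ab}$, tested against the odd path $P_{ac}$, satisfies $w_b\cleq\min_{\cleq}(N(a))$, contradicting $\max_{\cleq}(N(a))\cleq\min_{\cleq}(N(b))\cleq w_b$. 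The missing ingredient in your argument is bringing $N(b)$ into the ordering through these path-neighbors; without it, the both-odd configuration does not collapse.
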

    
    \begin{proofclaim}

        Notice first that, since $a\cleq b \cleq c$, by Lemma~\ref{lem:order_Puv}, $P_{ac}$ has odd length (otherwise $b\in [a,c]_\cleq \subseteq N_\cleq[P_{ac}]$, contradicting the fact that $\{a,b,c\}$ is a 2-AT). Thus we can use Lemma~\ref{lem:path_neighborhood} on vertex $b$ that avoids the neighborhood of $P_{ac}$, which gives that $\max_{\cleq} (N(c)) \cleq b\cleq \min_{\cleq} (N(a))$.
        Let us show here that $c\cleq \min_{\cleq}(N(a))$, the case $\max_{\cleq} (N(c))\cleq a$ follows from a similar argument.
        
        For the sake of contradiction, assume that there is a neighbor of $a$ in $[b,c]_\cleq$. By the *-C1P of $G$, the currently known ordering is:

        \begin{equation}\label{eq1}
            \textstyle a \cleq b\cleq \min_{\cleq} (N(a)) \cleq \max_{\cleq} (N(a))\cleq c.
        \end{equation}

        Let $w_a$ be the neighbor of $a$ on $P_{ab}$. From (\ref{eq1}), we have $b\cleq w_a \cleq c$, and  $w_a \notin N[P_{bc}]$ since $\{a,b,c\}$ is a $2$-AT, thus by Lemma~\ref{lem:order_Puv}, $P_{bc}$ has odd length. Then by applying Lemma~\ref{lem:path_neighborhood}, we get that $w_a \cleq \min_{\cleq}(N(b))$, and by *-C1P $\max_\cleq (N(a)) \cleq \min_{\cleq}(N(b))$ as the neighborhoods of $a$ and $b$ can not intersect. 

        Let $P_{w_ab}$ be the restriction of $P_{ab}$ between $w_a$ and $b$. Note that either $P_{ab}$ or $P_{w_ab}$ has odd length, none of them has $c$ in their neighborhood, and from (\ref{eq1}), we have $a \cleq b \cleq w_a \cleq c$. Thus, by applying Lemma~\ref{lem:path_neighborhood} to $c$ and either $P_{ab}$ or $P_{w_ab}$, we obtain that $\max_{\cleq}(N(b))\cleq c$. 
        Therefore, the currently known ordering can be summarized as:
        
        \begin{equation}\label{eq2}
            \textstyle a \cleq b\cleq \min_{\cleq} (N(a)) \cleq \max_{\cleq} (N(a)) \cleq \min_{\cleq} (N(b)) \cleq \max_\cleq(N(b)) \cleq c.
        \end{equation}

        Finally, let $w_b$ be the neighbor of $b$ on $P_{ab}$. Since $\{a,b,c\}$ is a 2-AT, $w_b$ does not belong to $N[P_{ac}]$. We saw previously that $P_{ac}$ has odd length, and from (\ref{eq2}), $a \cleq w_b \cleq c$. Thus, by Lemma~\ref{lem:path_neighborhood}, $\max_{\cleq}(N(c))\cleq w_b \cleq \min_{\cleq}(N(a))$, which implies that $ \max_{\cleq} (N(b)) \cleq \min_{\cleq}(N(a)) \cleq \min_{\cleq} (N(b))$, a contradiction. 
    \end{proofclaim}

    Let us now try to insert $N_{\cleq}(b)$ in the ordering given by Claim~\ref{claim:SAT_config}.
    Let $w_a$ be the neighbor of $a$ on $P_{ab}$, and let $P_{w_ab}$ be the restriction of $P_{ab}$ between $w_a$ and $b$. 
    From Claim~\ref{claim:SAT_config}, $b \cleq c \cleq w_a$, thus by Lemma~\ref{lem:order_Puv}, $P_{w_ab}$ has odd length (otherwise $c\in [b,w_a]_\cleq \subseteq N_\cleq[P_{ab}]$, contradicting the fact that $\{a,b,c\}$ is a 2-AT). Then by applying Lemma~\ref{lem:path_neighborhood} to $c$ and the induced path $P_{w_ab}$, we get that $c\cleq \min_{\cleq}(N(b))$.   

    Similarly, let $w_c$ be the neighbor of $c$ on $P_{bc}$, and let $P_{bw_c}$ be the restriction of $P_{bc}$ between $b$ and $w_c$. By Claim~\ref{claim:SAT_config}, $w_c \cleq a \cleq b$, thus by Lemma~\ref{lem:order_Puv}, $P_{bw_c}$ has odd length, and then, by applying Lemma~\ref{lem:path_neighborhood} to $a$ and the induced path $P_{bw_c}$, we get that $\max_{\cleq}(N(b)) \cleq a$.  

    Therefore $\max_{\cleq}(N(b)) \cleq a \cleq \min_{\cleq}(N(b))$, a contradiction.
\end{proof}




\bibliography{C1Pa}
\bibliographystyle{alpha}

\end{document}